\title[Double-sided torus actions and complex geometry on $SU(3)$]
{Double-sided torus actions and complex geometry on $SU(3)$}
\author{Hiroaki Ishida}
\address{Department of Mathematics, Graduate School of Science, Osaka Metropolitan University}
\email{hiroaki.ishida@omu.ac.jp}
\author{Hisashi Kasuya}
\address{Department of Mathematics, Graduate School of Science, Osaka University, Osaka, Japan.}
\email{kasuya@math.sci.osaka-u.ac.jp}
\thanks{The first author is supported by JSPS KAKENHI Grant Number JP20K03592. The second author is supported by JSPS KAKENHI Grant Number JP19H01787, JP21K03248}
\date{\today}
\subjclass{22C05, 32M25, 53D20, 57S25}
\newif\ifdebug
\newtheorem{prop}{Proposition}[section]
\newtheorem*{prop*}{Proposition \referenza}
\newtheorem{thm}[prop]{Theorem}
\newtheorem*{thm*}{Theorem \referenza}
\newtheorem{cor}[prop]{Corollary}
\newtheorem*{cor*}{Corollary \referenza}
\newtheorem{lemm}[prop]{Lemma}
\theoremstyle{remark}
\newtheorem{rema}[prop]{Remark}
\theoremstyle{definition}
\newtheorem{exam}[prop]{Example}
\newcommand{\C}{\mathbb{C}}
\newcommand{\R}{\mathbb{R}}
\newcommand{\Z}{\mathbb{Z}}
\newcommand{\Ima}{\operatorname{Im}}
\newcommand{\Rea}{\operatorname{Re}}
\newcommand{\diag}{\mathrm{diag}}
\newcommand{\cone}{\mathrm{cone}}
\newcommand{\relmiddle}[1]{\mathrel{}\middle#1\mathrel{}}
\def\co{\colon\thinspace}
\begin{document} 

\maketitle

\begin{abstract}
We construct explicit complex structures and transverse K\"ahler holomorphic foliations on $SU(3)$. These correspond to variations of real quadratic equations on a complex quadric in $\C^{6}$ as generalizations of left-invariant complex structures on $SU(3)$ and an invariant  K\"ahler structure on the flag variety $SU(3)/T $.
Consequently, we obtain orbifold variants of  the flag variety $SU(3)/T $ as quotients of double-sided torus actions.

\end{abstract}
\section{Introduction}

Let $G$ be a compact connected semi-simple Lie group and $T$ a maximal torus.
The homogeneous space $G/T$ is called flag variety and admits a $G$-invariant K\"ahler structure.
Suppose that $G$ is even dimensional.
Then,  Samelson  \cite{Samelson1953} and Wang \cite{Wang1954}  proved that there exist  left-invariant complex structures on $G$.
Moreover, for any left invariant complex structures on $G$, we may choose a maximal torus $T$ of $G$ so that the principal bundle $T\to G\to G/T$ is holomorphic.
We are interested in constructing variants of these geometric objects.

We consider an action of $T \times T$ on $G$ given by $t\cdot g=t_{1}g t_{2}^{-1}$ for $t=(t_{1},t_{2}) \in T\times T$ and $g\in G$.  
Let $T'$ be a subtorus of $T \times T$. We call the restricted action of $T \times T$ to $T'$ the double-sided torus action on $G$. If the double-sided torus action of $T'$ on $G$ is locally free, then the orbit space $G/T'$ has a structure of an orbifold, and the quotient map $G \to G/T'$ is a Seifert fibering. It is natural to ask whether $G$ has a complex structure, $G/T'$ has a K\"ahler structure and $G \to G/T'$ is a holomorphic Seifert fibering. 
We want to clarify a sufficient condition for it in terms of $T'$. 

This paper focuses on the case when $G = SU(3)$. We study the double-sided torus action on $SU(3)$ and its quotient space from the moment maps perspectives. Let $T$ be the maximal compact torus $\left\{ g= \diag(g_1,g_2, g_3) \relmiddle| g_1,g_2, g_3 \in S^1, g_1g_2g_3=1\right\}$ 
in $SU(3)$, where $\diag(g_1,g_2,g_3)$ denotes the diagonal matrix of size $3$ whose $(j,j)$-entry is $g_j$. 
 For $t = (t_1, t_2) \in (S^1)^2$ and $w = (w_1, w_2) \in \Z^2$, by $t^w$ we mean $t_1^{w_1}t_2^{w_2}$. For $s_1,\dots, s_k$ in $\R^2$, by $\cone (s_1,\dots, s_k)$ we mean the cone spanned by $s_1,\dots, s_k$. That is, $\cone (s_1,\dots, s_k) = \{ \sum_{j=1}^k \lambda_js_j \mid \lambda_j \geq 0\}$. 
The following is the main theorem of this paper: 
\begin{thm} \label{thm:mainthm}
	Let $\rho_L, \rho_R \co (S^1)^2 \to T$ be smooth homomorphisms given by 
	\begin{equation*}
		\begin{split}
			\rho_L (t) &= \diag (t^{w_1^L}, t^{w_2^L}, t^{w_3^L}), \\
			\rho_R (t) &= \diag (t^{w_1^R}, t^{w_2^R}, t^{w_3^R}).
		\end{split}
	\end{equation*}
	Here $w_j^L$, $w_j^R \in \Z^2$. Put 
	\begin{equation*}
		A_j := w_j^L - w_1^R, \quad B_j := -w_j^L +w_3^R, \quad C := -w_1^R +w_3^R.
	\end{equation*}
	Assume that $A_j, B_j$ for $j=1,2,3$ and $C$ satisfy 
	\begin{itemize}
		\item[($\star$)] $C \notin \cone (A_i, A_j), \cone (B_i, B_j)$ for all $i,j \in \{1,2,3\}$ and $C \in \cone (A_i, B_j)$ for all $i,j \in \{1,2,3\}$. 
	\end{itemize}
	Then, the following hold: 
	\begin{enumerate}
		\item There exists a $T \times T$-invariant complex structure on $SU(3)$ such that $(\rho_L, \rho_R)((S^1)^2)$-orbits form a transverse K\"ahler holomorphic foliation. 
		\item There exists a $T \times T/(\rho_L, \rho_R)((S^1)^2)$-invariant K\"ahler orbifold structure on the quotient
		 $SU(3)/(\rho_L, \rho_R)((S^1)^2)$.  
	\end{enumerate}
\end{thm}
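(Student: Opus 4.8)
The plan is to recast the statement on a complex quadric and then run a Calabi--Eckmann/L\'opez de Medrano--Verjovsky--Meersseman (LVM) type construction. First I would realize $SU(3)$ concretely: writing $g\in SU(3)$ through its columns $v_1,v_2,v_3$ and setting $x_i:=(v_1)_i$ and $y_i:=\overline{(v_3)_i}$, orthonormality of $v_1,v_3$ becomes the single \emph{complex} equation $\sum_i x_iy_i=\langle v_1,v_3\rangle=0$ together with the two \emph{real} equations $\sum_i|x_i|^2=\sum_i|y_i|^2=1$; since $v_2$ is then determined (and unique by $\det=1$), this identifies
\[
	SU(3)\cong\left\{(x,y)\in\C^{3}\times\C^{3}\relmiddle|\ \textstyle\sum_{i}x_{i}y_{i}=0,\ \sum_{i}|x_{i}|^{2}=1,\ \sum_{i}|y_{i}|^{2}=1\right\},
\]
so $SU(3)$ is cut out of the affine quadric $Q:=\{\sum_i x_iy_i=0\}\subset\C^{6}$ by two real quadrics, exactly as the abstract advertises. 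A direct computation shows that the double-sided action of $T':=(\rho_L,\rho_R)((S^1)^2)$ becomes the coordinatewise action on $\C^6$ with weight $A_i$ on $x_i$ and $B_i$ on $y_i$ (this is the origin of the reference indices $w_1^R,w_3^R$). Crucially, because $A_i+B_i=C$ for \emph{every} $i$, every monomial $x_iy_i$ has the same $T'$-weight $C$, so the defining form is $T'$-homogeneous and $Q$ is $T'_{\C}$-invariant.

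For part (1) I would imitate LVM. Let $T'_{\C}\cong(\C^{\ast})^2$ act holomorphically on $Q$ with these weights, and set $\Omega:=T'_{\C}\cdot SU(3)\subset Q$. The crux lemma to establish is that, under $(\star)$, there is a one-parameter complex subgroup $\Gamma\subset T'_{\C}$ of complex dimension one (the analogue of the Calabi--Eckmann parameter) acting freely and properly on $\Omega$ such that each $\Gamma$-orbit meets $SU(3)$ in exactly one point; concretely the two real equations $\sum_i|x_i|^2=\sum_i|y_i|^2=1$ should cut each $\Gamma$-orbit transversally in a single point, a solvability-plus-uniqueness statement of Siegel/weak-hyperbolicity type. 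This yields $SU(3)\cong\Omega/\Gamma$ and hence a complex structure $J$. The residual holomorphic action of $T'_{\C}/\Gamma$ then descends to $SU(3)$ and foliates it; its leaves are precisely the $T'$-orbits, now compact complex curves (elliptic curves), so the $T'$-orbits form a holomorphic foliation. Since the whole picture lives inside the $(T\times T)_{\C}$-equivariant quadric, $J$ is automatically $T\times T$-invariant.

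For part (2) I would use symplectic reduction. The set $Q\setminus\{0\}$ is K\"ahler as a complex submanifold of $\C^6$, the torus $T'$ acts preserving the form with moment map $\mu(x,y)=\sum_i|x_i|^2A_i+\sum_i|y_i|^2B_i$, and I would identify $SU(3)/T'$ with the GIT/symplectic quotient of $Q$ by $T'$ in the chamber of $C$, a K\"ahler orbifold; the transverse K\"ahler form is then its pullback along $SU(3)\to SU(3)/T'$. Here condition $(\star)$ is read directly off the coordinate strata of $Q$: the stratum $\{x_i\neq0,\,y_j\neq0\}$ lies on $Q$ only when $i\neq j$, and for $i=j$ the relation $C=A_i+B_i$ makes $C\in\cone(A_i,B_i)$ automatic, so $C\in\cone(A_i,B_j)$ for $i\neq j$ is the genuine (semi)stability requirement on the surviving mixed strata; meanwhile the pure strata $\{y=0\}$ and $\{x=0\}$ (where $\sum|y_i|^2=1$, resp.\ $\sum|x_i|^2=1$, fail) must be unstable, which is exactly $C\notin\cone(A_i,A_j)$ and $C\notin\cone(B_i,B_j)$. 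Linear independence within the surviving cones makes the quotient simplicial, hence an orbifold, and the K\"ahler form survives the reduction, giving the orbifold variant of $SU(3)/T$.

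The main obstacle I anticipate is the convex-geometric core, namely proving that $(\star)$ is \emph{equivalent} to the full analytic package used above: existence of a free and proper $\Gamma$, uniqueness of the slice, local freeness of $T'$ on $SU(3)$, and simpliciality of the chamber of $C$. Because we work on the quadric $Q$ rather than on all of $\C^6$, the usual LVM/toric dictionary must be redone feeding in both $\sum_i x_iy_i=0$ and $A_i+B_i=C$ at every step; the bookkeeping of which coordinate strata survive, together with the verification that the transported $J$ is integrable and compatible with the transverse K\"ahler structure, is where the real work lies. The remaining points --- $T\times T$-invariance of $J$, the elliptic-fiber description of the leaves, and the fact that the transverse form is the pullback of the reduced K\"ahler form --- are then essentially formal.
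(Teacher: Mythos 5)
Your reduction to the quadric, the weight computation ($A_i$ on $x_i$, $B_j$ on $y_j$, with $A_i+B_i=C$ making $\sum x_iy_i$ a $T'$-semi-invariant), and the overall LVM/Calabi--Eckmann strategy are exactly the circle of ideas the paper uses: the paper's normal holomorphic foliation is generated by $Z=X-JY$ and $W=JX+Y$ for the fundamental vector fields $X,Y$ of $T'$, which is precisely the orbit foliation of your one-parameter complex subgroup $\Gamma=\exp(\C(\xi_1-\sqrt{-1}\xi_2))\subset T'_\C$. The genuine gap is in your ``crux lemma'': you ask that each $\Gamma$-orbit meet the \emph{round} level set $\{\sum|x_i|^2=\sum|y_i|^2=1\}$ transversally in exactly one point. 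That is not the statement the LVM/Kempf--Ness machinery delivers. What one can prove (and what the paper proves, by the positivity argument $\omega(V,JV)>0$ in Section 2) is transversality of the $\Gamma$-orbits to the \emph{moment} level set $\Phi^{-1}(C)$ with $\Phi(x,y)=\sum_i A_i|x_i|^2+\sum_j B_j|y_j|^2$ --- the level set cut out by the Hamiltonians of the very vector fields generating the foliation. Restricted to a $\Gamma$-orbit, your two round equations read $\sum_i e^{2\Rea(\lambda\langle u,A_i\rangle)}|x_i|^2=1$ and $\sum_j e^{2\Rea(\lambda\langle u,B_j\rangle)}|y_j|^2=1$; since the $A_i$ (resp.\ $B_j$) are not all equal, these are two genuinely curved strictly convex level curves in the $\lambda$-plane, and two such curves can meet in zero or two points. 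Nothing in $(\star)$ obviously rules this out, and ``Siegel/weak-hyperbolicity'' does not apply verbatim because that argument is tied to the moment level set, where convexity of the Kempf--Ness potential gives uniqueness.

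The missing ingredient, which your proposal has no substitute for, is the identification of $\Phi^{-1}(C)$ with $SU(3)$ as a $T\times T$-manifold. This is the paper's Theorem \ref{thm:SU3}, and it is not formal: one first checks (Propositions \ref{prop:nonempty}--\ref{prop:compact} and Lemma \ref{lemm:star}) that $(\star)$ forces $\Phi^{-1}(C)$ to be nonempty, regular and compact, then deforms the weights linearly from $(A_j,B_j)$ to degenerate weights $(aA_1,bB_1)$ with $aA_1+bB_1=C$ --- for which the level set is visibly the round $SU(3)$ after a linear change of $\Phi$ --- verifies that $(\star)$ persists along the whole path, and concludes with an equivariant Ehresmann fibration lemma. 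The same gap propagates to your part (2): the symplectic quotient you construct is $\mu^{-1}(C)/T'=\Phi^{-1}(C)/T'$, and identifying it with $SU(3)/T'$ again requires this equivariant diffeomorphism. Your reading of $(\star)$ off the coordinate strata of $Q$ is a good heuristic (it matches the roles of conditions {\bf (N)}, {\bf (R)}, {\bf (C)} in the paper), but as written the proof would stall at the slice/level-set identification.
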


$G=SU(3)$ is advantageous. 
Our construction is explicit and may not be generalized to general compact Lie groups.
We know that $SU(3)$ is presented by  explicit real quadric equations on a  complex quadric in $\C^{6}$ (see Section \ref{sec:levelset}).
The principal idea is to vary such real quadric equations corresponding to $(\rho_L,\rho_R)$.
An explicit holomorphic foliation defines a complex structure (Haefliger's trick as in \cite{Haefliger1985}), and a transverse K\"ahler structure comes from the standard K\"ahler structure on $\C^{6}$.
These constructions are  closely related to LVM manifolds \cite{LV1997, Meersseman2000}.
In \cite{MV2004}, it is shown that certain non-K\"ahler manifolds (now known as LVM manifolds) admit structures of principal holomorphic Seifert fibering over toric varieties under some conditions. Theorem \ref{thm:mainthm} is a variant of this result by replacing toric varieties with ``twisted" flag varieties. 

Under the assumption ($\star$) in Theorem \ref{thm:mainthm}, we can check that 
 the action of $(S^1)^2$ on $SU(3)$ given by $(\rho_L, \rho_R)$  on $SU(3)$ is free if and only if $\rho_L$ is trivial and $\rho_R$ is an isomorphism (Proposition \ref{prop:free}).
We obtain essentially new  objects only   if  the action of $(S^1)^2$  is not free.
Meanwhile it is shown in \cite{Eschenburg1992} and \cite{EZ2014} implicitly and \cite{GKZ2020} explicitly that there exists a two-dimensional torus $S_{12}$ such that $S_{12}$ is not contained in $1 \times SU(3)$ and the quotient space $SU(3)/S_{12}$ admits a structure of K\"ahler ``manifold". 
However, it is also shown in \cite[Theorem 3.3]{GKZ2020} that there is no K\"ahler structure on $SU(3)/S_{12}$, which is invariant under the action of two-dimensional torus $T \times T/S_{12}$, not like the flag manifold. 
Thus,  such an example seems to be essentially different from our objects obtained by Theorem \ref{thm:mainthm}.
If $\rho_L$ is non-trivial, then a complex structure in Theorem \ref{thm:mainthm} may not be left-invariant.
Non-left-invariant complex structures on compact Lie groups are studied in \cite{LMN2007} and \cite{IK2020}.

The remainder of this paper is organized as follows. In Section \ref{sec:transverseKaehler}, we develop a critical tool to construct a complex manifold equipped with a transverse K\"ahler structure via moment maps. In Section \ref{sec:levelset}, we  construct an embedding of  $SU(3)$ into a complex quadric and see that the image coincides with a level set of a moment map for some torus action. Section \ref{sec:intersection} studies the intersections of real quadrics in a complex quadric. This is an analogue of intersections of special real quadrics in $\C^n$ developed in \cite{MV2004}, and we use similar arguments. Using results obtained in previous sections, we show Theorem \ref{thm:mainthm}. In Section \ref{sec:freeness}, we study the freeness of the action of $(S^1)^2$ on $SU(3)$ under condition $(\star)$. As a result, the quotient $SU(3)/(\rho_L, \rho_R)((S^1)^2)$ has an orbifold singularity, except when $SU(3)/(\rho_L, \rho_R)((S^1)^2)$ is the ordinal flag manifold. We also provide a nontrivial concrete example of $(\rho_L, \rho_R)$ such that $SU(3)/(\rho_L, \rho_R)((S^1)^2)$ has a K\"ahler orbifold structure. 
In  Section \ref{sec:coho}, we compute cohomologies associated with each structure in Theorem \ref{thm:mainthm}.
\bigskip

\noindent {\bf Acknowledgement.} The authors are grateful to the anonymous referee for pointing out unclear points and providing invaluable comments and useful suggestions for improving the text. The first author is supported by JSPS KAKENHI Grant Number JP20K03592. The second author is supported by JSPS KAKENHI Grant Number JP19H01787.

\section{Holomorphic foliations and commuting Hamiltonians}\label{sec:transverseKaehler}
	This section constructs a transverse K\"ahler structure on a real submanifold in a K\"ahler manifold with a commuting Hamiltonian. We begin by recalling some notions and definitions. For details, we mention the books \cite{Audin2004} for symplectic geometry and \cite{MM2003} for foliations as excellent references. Let $M$ be a smooth manifold of dimension $m$. A foliation atlas of codimension $q$ on $M$ is an atlas $\{ \phi_\alpha = (x_{\alpha,1}, \dots, x_{\alpha, q}, y_{\alpha, 1}, \dots, y_{\alpha, m-q}) \co U_\alpha \to V_\alpha \times W_\alpha\}$ such that 
		\begin{itemize}
			\item $U_\alpha \subset M$, $V_\alpha \subset \R^{q}$ and $W_\alpha \subset \R^{m-q}$ are open subsets, 
			\item the transition functions $\phi_{\alpha\beta} = \phi_\beta\circ \phi_\alpha^{-1}$ are of the form 
			\begin{equation*}
				\phi_{\alpha\beta} (x, y) = (g_{\alpha\beta}(x), h_{\alpha\beta}(x,y)) \in \phi_\beta(U_\alpha \cap U_\beta) \subset V_\beta \times W_\beta
			\end{equation*}
			for $(x,y) \in \phi_\alpha(U_\alpha \cap U_\beta) \subset V_\alpha \times W_\alpha$. That is, the first $q$ coordinate transition functions do not depend on last $m-q$ coordinate functions. 
		\end{itemize}
		Each $\phi_\alpha$ is called a foliation chart. A foliation on $M$ is an equivalence class of a foliation atlas. If $M$ is a complex manifold of complex dimension $m$, $V_\alpha$ and $W_\alpha$ are subsets of $\C^{q}$ and $\C^{m-q}$, respectively, and each $\phi_\alpha$ is a biholomorphic map, then  the foliation on $M$ is said to be holomorphic. 
		
		Let $\{ \phi_\alpha = (x_{\alpha,1}, \dots, x_{\alpha, q}, y_{\alpha, 1}, \dots, y_{\alpha, m-q})\}$ be a foliation atlas of codimension $q$ on a smooth manifold $M$ of dimension $m$.  We can obtain an integrable distribution $\mathcal{D}$ of codimension $q$ on $M$ as follows. Let $p \in M$. The subspace $\mathcal{D}_p$ of the tangent space $T_pM$ at $p$ spanned by 
		\begin{equation*}
			\left(\frac{\partial}{\partial y_{\alpha, 1}}\right)_p, \dots, \left(\frac{\partial}{\partial y_{\alpha,  m-q}}\right)_p
		\end{equation*} does not depend on the choice of $\alpha$ because $\{\phi_\alpha\}$ is a foliation atlas. By definition, the distribution $\mathcal{D} = \{\mathcal{D}_p\}$ is integrable. Conversely, Frobenius theorem yields that any integrable distribution of codimension $q$ determines a unique foliation structure of codimension $q$. If $M$ is a complex manifold and $\mathcal{D}$ is a holomorphic distribution, then $\mathcal{D}$ determines a holomorphic foliation on $M$. We denote the corresponding distribution for a foliation $\mathcal{F}$ on $M$ by $TF=\{T_pF\}$. Let $N$ be a submanifold of dimension $q$ of $M$. We say that a foliation $\mathcal{F}$ is transverse to $N$ if $T_pM = T_pN \oplus T_pF$ for all $p \in N$. A differential $2$-form $\omega$ on a complex manifold $M$ with a complex structure $J$ is  said to be transverse K\"ahler with respect to a holomorphic foliation $\mathcal{F}$ if the following three conditions hold: (1) $\omega$ is closed, (2) $\omega(-,-) = \omega(J-,J-)$ and (3) $\omega(JX,X) \geq 0$ for all $X \in TM$ and $\omega(JX,X) = 0$ if and only if $X \in T\mathcal{F}$. 
			
	Let $(M, \omega, J)$ be a K\"ahler manifold of complex dimension $m$; that is, $M$ is a smooth manifold of dimension $2m$, $\omega$ is a symplectic form (that is, a nondegenerate closed $2$-form) on $M$, and $J$ is an integrable complex structure on $M$ such that $\omega(J-,J-)=\omega(-,-)$ and $\omega(J-,-)$ is positive definite. For a smooth function $f$ on $M$, a vector field defined by $-i_X\omega = df$ is called the Hamiltonian vector field associated with $f$, where $i_X\omega$ is the interior product of $\omega$ and $X$. If $X$ is a Hamiltonian vector field associated with a smooth function, then $L_X\omega=0$, where $L_X\omega$ is the Lie derivative of $\omega$ with respect to $X$. A vector field $X$ is said to be Killing if $L_X\omega (J-,-) =0$. If $X$ is Killing and Hamiltonian, then we have $L_XJ = 0$. 
	
	In the sequel of this section, we assume that $(M,\omega, J)$ is a K\"ahler manifold of complex dimension $m$, $f_1,\dots, f_n, g_1,\dots, g_n$ are smooth functions on $M$, and $X_1,\dots, X_n, Y_1,\dots, Y_n$  
	are the Hamiltonian vector fields associated to $f_1,\dots, f_n, g_1,\dots, g_n$, respectively. We  also assume that $X_1,\dots, X_n, Y_1,\dots, Y_n$ are Killing and commute with each other. Let $\alpha \in \R^{2n}$ be a regular value of the smooth map 
	\begin{equation*}
		\Phi := (f_1,\dots, f_n, g_1,\dots, g_n) \co M \to \R^{2n}
	\end{equation*} and assume that $\Phi ^{-1}(\alpha) \neq \emptyset$. 
	
	First, we construct a holomorphic foliation $\widetilde{\mathcal{F}}$ on a neighborhood of $N := \Phi ^{-1}(\alpha)$ transverse to $N$. For $i=1,\dots, n$, we define vector fields 
	$Z_i := X_i - JY_i$, $W_i := JX_i +Y_i$ on $M$.  Let $p \in N$. We claim that the subspace $\langle Z_1(p), \dots, Z_n(p), W_1(p), \dots, W_n(p)\rangle$ of $T_pM$ spanned by $Z_1(p), \dots, Z_n(p), W_1(p), \dots, W_n(p)$ is transverse to $T_pN$. 
	Let $a_1,\dots, a_n, b_1,\dots, b_n \in \R$. Assume that $\sum_{i=1}^na_iZ_i(p) + b_iW_i(p) \in T_pN$. Since $N$ is a regular level set of $\Phi$, we have $T_pN = \{ v \in T_pM \mid \omega(X_i(p),v) = \omega(Y_i(p),v)=0 \text{ for all $i$}\}$. Thus, 
	\begin{equation*}
		\begin{split}
			0&= \omega(\sum_{i=1}^nb_iX_i(p)-a_iY_i(p), \sum_{i=1}^n a_iZ_i(p) + b_iW_i(p) )\\ 
			&= \omega(\sum_{i=1}^n b_iX_i(p)-a_iY_i(p), J(\sum_{i=1}^n b_iX_i(p)-a_iY_i(p)))
		\end{split}
	\end{equation*}
	because $X_1,\dots, X_n, Y_1,\dots, Y_n$ are commuting and Hamiltonian. 
	Since the bilinear form $\omega (-,J-)$ is negative definite, we have  $\sum_{i=1}^nb_iX_i(p)-a_iY_i(p) =0$. Since $p$ is a regular point, tangent vectors $X_1(p), \dots, X_n(p), Y_1(p),\dots, Y_n(p)$ are linearly independent. Therefore, $a_1 = \dots = a_n = b_1 = \dots = b_n=0$. This shows that 
	\begin{equation*}
		T_pN \cap \langle Z_1(p), \dots, Z_n(p), W_1(p), \dots, W_n(p)\rangle = 0
	\end{equation*}
	and $\dim \langle Z_1(p), \dots, Z_n(p), W_1(p), \dots, W_n(p)\rangle = 2n$. 
	Because of the dimensions, we have 
	\begin{equation*}
		T_pM = T_pN \oplus \langle Z_1(p), \dots, Z_n(p), W_1(p), \dots, W_n(p)\rangle.
	\end{equation*}
	Since $X_i$ and $Y_i$ are Killing and Hamiltonian, we have $L_{X_i}J = L_{Y_i}J = 0$ for all $i$. Since the vector fields $X_1,\dots, X_n,Y_1,\dots, Y_n$ commute, we have that $[X_i, JX_j] = 0$, $[X_i, JY_j]$, $[Y_i, JX_j]$ and $[Y_i,JY_j] = 0$ for all $i,j$. This, together with the vanishing of the Nijenhuis tensor, implies that the vector fields $JX_1,\dots, JX_n, JY_1,\dots, JY_n$ commute. Thus we have that $Z_1,\dots, Z_n, W_1,\dots, W_n$ commute with each other. Therefore $Z_1,\dots, Z_n, W_1, \dots, W_n$ generates a foliation $\widetilde{\mathcal{F}}$ on a neighborhood of $N$ that is transverse to $N$.  Since $JZ_i = W_i$,  the foliation $\widetilde{\mathcal{F}}$ is a holomorphic foliation. 
	
	We construct a complex structure $J_N$ on $N$ using  the foliation $\widetilde{\mathcal{F}}$. Let $p \in N$ and $\phi_\alpha \co U_\alpha \to V_\alpha \times W_\alpha$ be a foliation chart about $p$. By taking sufficiently small $V_\alpha$ and $W_\alpha$, we have a foliation chart $\phi_\alpha$ such that, for any $z_\alpha \in V_\alpha$, there uniquely exists $w_\alpha(z) \in W_\alpha$ such that $\phi_\alpha^{-1}(z_\alpha,w_\alpha(z_\alpha)) \in N$. Let $\{\phi_\alpha \co U_\alpha \to V_\alpha \times W_\alpha\}$ be the set of all such foliated charts. Then, $\{\phi_\alpha\}$ is a foliated atlas on a neighborhood of $N$. Let $\phi_{\alpha1} \co U_\alpha \to V_\alpha$ be the first factor of $\phi_\alpha$. For $p \in N$, we have $\phi_\alpha(p) = (\phi_{\alpha1}(p), w_\alpha(\phi_{\alpha1}(p))$. Thus, we have 
	\begin{equation*}	
		\begin{split}
			 \phi_{\beta1}|_{U_{\beta1}\cap N}\circ (\phi_{\alpha1}|_{U_{\alpha1}\cap N})^{-1} (z_\alpha) &=  \phi_{\beta1}|_{U_{\beta1}\cap N}(\phi_{\alpha}^{-1}(z_\alpha, w_\alpha(z_\alpha))\\
			 &= g_{\alpha\beta}(z_\alpha). 
		\end{split}
	\end{equation*}
	Therefore, $\{\phi_{\alpha1}|_{N\cap U_\alpha} \co N\cap U_\alpha \to V_\alpha\}$ is a holomorphic atlas on $N$. The complex structure $J_N$ on $N$ is described as follows. Let $\overline{i} \co TN \to TM|_N/T\widetilde{\mathcal{F}}|_N$ be the isomorphism induced by the inclusion $i \co TN \to TM|_N$. The projection is denoted by $\pi \co TM|_N \to TM|_N/T\widetilde{\mathcal{F}}|_N$. Then, $J_N$ is given by $J_N = \overline{i}^{-1}\circ\pi \circ J \circ i$ because the complex structure on $N \cap U_\alpha$ comes from $V_\alpha$. 
	
	Since $N$ is a regular level set of the map $\Phi \co M \to \R^{2n}$ and $X_1,\dots, X_n, Y_1,\dots, Y_n$ commute with each other, we have that $X_1,\dots, X_n, Y_1,\dots, Y_n$ define a foliation $\mathcal{F}$ on $N$. We shall see that $\mathcal{F}$ is holomorphic. Since $J_N = \overline{i}^{-1}\circ\pi \circ J \circ i$, we have $J_NX_i(p) \in JX_i(p) + T_p\widetilde{\mathcal{F}}$. On the other hand, $Y_i(p)-JX_i(p) = W_i(p) \in T_p\widetilde{\mathcal{F}}$. Therefore $J_NX_i(p) = Y_i(p)$. By the same argument, we have $J_NY_i(p) = -X_i(p)$. This shows that $\mathcal{F}$ is a holomorphic foliation on $N$. 
	
	Finally, we show that $\omega|_N$ is a transverse K\"ahler form on $N$ with respect to $\mathcal{F}$ by using similar arguments typically used when considering K\"ahler reduction. Let $T_p\mathcal{F}^\perp$ denote the orthogonal complement of $T_p\mathcal{F}$ in $T_pN$ for the Riemannian metric $\omega(J-, -)$. 
	Since 
	\begin{equation*}
		\begin{split}
			T_p\mathcal{F}^\perp & = \{ v \in T_pN \mid \omega (Jv, u) = 0 \text{ for all $u \in T_p\mathcal{F}$}\}\\
			&= \{ v \in T_pM \mid \omega (v, u) = \omega (Jv, u) =0 \text{ for all $u \in T_p\mathcal{F}$}\},
		\end{split}
	\end{equation*}
	we found that $T_p\mathcal{F}^\perp$ is invariant under the complex structure $J$ on $M$. Therefore, $J_N|_{T_p\mathcal{F}^\perp} = J|_{T_p\mathcal{F}^\perp}$. Let $X, Y \in T_pN$. Take $X',Y' \in T_p\mathcal{F}^\perp$ and $X'', Y'' \in T_p\mathcal{F}$ such that $X = X' + X''$ and $Y= Y'+ Y''$. Then,
	\begin{equation*}
			\omega(J_NX, J_NY) = \omega(JX' + J_NX'', JY' + J_NY'')=\omega(JX', JY') = \omega (X',Y')
	\end{equation*}
	because $J_NX'', J_NY'' \in T_p\mathcal{F}$ and $\omega |_N (-,Z)=0$ for any $Z \in T_p\mathcal{F}$. 
	In contrast, 
	\begin{equation*}
			\omega(X, Y) = \omega (X' + X'', Y'+Y'') = \omega (X', Y')
	\end{equation*}
	by the same reason. Thus, $\omega(J_NX, J_NY) = \omega(X, Y)$, showing that $\omega|_N$ satisfies that 
	\begin{equation*}
		\omega|_N(J_N-,J_N-) = \omega|_N(-,-).
	\end{equation*}
	 Since 
	\begin{equation*}
		\omega (J_NX, X) = \omega (JX' + J_NX'', X' + X'') = \omega (JX', X')
	\end{equation*}
	and $\omega (J-, -)$ is positive definite, we  found that $\omega (J_NX, X) \geq 0$, and the equality holds if and only if $X \in T_p\mathcal{F}$. This shows that $\omega|_N$ is a transverse K\"ahler form on $N$ with respect to $\mathcal{F}$.

	We shall state the conclusion in this section as a theorem for later use. 
	\begin{thm}\label{thm:tK}
		The regular level set $N$ has the complex structure $J_N$ such that the $2$-form $\omega|_N$ is transverse K\"ahler form  with respect to the foliation $\mathcal{F}$ generated by commuting Hamiltonian vector fields.  The foliation $\mathcal{F}$ is holomorphic. 
	\end{thm}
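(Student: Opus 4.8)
The plan is to prove the three bundled assertions — existence of the complex structure $J_N$, holomorphicity of the foliation $\mathcal{F}$, and the transverse K\"ahler property of $\omega|_N$ — by first manufacturing an auxiliary holomorphic foliation $\widetilde{\mathcal{F}}$ on a neighborhood of $N$ that is transverse to $N$, then transporting the complex structure it carries back onto $N$ via Haefliger's trick, and finally verifying the transverse K\"ahler conditions by a computation modeled on K\"ahler reduction.

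First I would build $\widetilde{\mathcal{F}}$. Set $Z_i := X_i - JY_i$ and $W_i := JX_i + Y_i$, so that $W_i = JZ_i$. The initial task is to check that at each $p \in N$ the subspace $\langle Z_1(p),\dots,Z_n(p),W_1(p),\dots,W_n(p)\rangle$ has dimension $2n$ and meets $T_pN$ only in $0$. Since $\alpha$ is a regular value, $T_pN = \{v \in T_pM \mid \omega(X_i(p),v) = \omega(Y_i(p),v) = 0 \text{ for all } i\}$; pairing a hypothetical element $\sum_i a_i Z_i(p) + b_i W_i(p)$ of $T_pN$ against $\sum_i b_i X_i(p) - a_i Y_i(p)$ and using that the $X_i, Y_i$ are commuting and Hamiltonian collapses the expression to $\omega(v, Jv)$ with $v = \sum_i b_i X_i(p) - a_i Y_i(p)$, which vanishes only for $v = 0$ because $\omega(-,J-)$ is definite; regularity then forces all coefficients to vanish, and a dimension count gives $T_pM = T_pN \oplus \langle Z_i(p), W_i(p)\rangle$. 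To see that this distribution is involutive and $J$-invariant, I would use that the $X_i, Y_i$, being Killing and Hamiltonian, satisfy $L_{X_i}J = L_{Y_i}J = 0$; combined with the commutativity of $X_1,\dots,X_n,Y_1,\dots,Y_n$ this forces the brackets $[X_i,JX_j]$, $[X_i,JY_j]$, $[Y_i,JX_j]$, $[Y_i,JY_j]$ to vanish, and the vanishing Nijenhuis tensor then shows $JX_i,JY_i$ commute among themselves, so all of $Z_1,\dots,Z_n,W_1,\dots,W_n$ commute. Frobenius produces $\widetilde{\mathcal{F}}$, and $W_i = JZ_i$ makes its tangent distribution $J$-invariant, so $\widetilde{\mathcal{F}}$ is holomorphic.

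I would then extract $J_N$ and confirm that $\mathcal{F}$ is holomorphic. Transversality of $\widetilde{\mathcal{F}}$ to $N$ lets one choose foliation charts in which each plaque meets $N$ in exactly one point, so the transverse coordinate restricts to a chart on $N$ whose transition functions are precisely the holomorphic transverse transition functions $g_{\alpha\beta}$ of $\widetilde{\mathcal{F}}$; this gives $N$ a holomorphic atlas and the complex structure $J_N = \overline{i}^{-1}\circ\pi\circ J\circ i$, where $\overline{i}\co TN \to TM|_N/T\widetilde{\mathcal{F}}|_N$ is induced by the inclusion and $\pi$ is the projection. Because $W_i = Y_i - JX_i \in T\widetilde{\mathcal{F}}$, one reads off $J_N X_i = Y_i$ and $J_N Y_i = -X_i$, so the foliation $\mathcal{F}$ on $N$ generated by $X_i, Y_i$ is $J_N$-invariant and therefore holomorphic.

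Finally, for the transverse K\"ahler property I would split $T_pN = T_p\mathcal{F} \oplus T_p\mathcal{F}^\perp$ orthogonally for the metric $\omega(J-,-)$, observe that $T_p\mathcal{F}^\perp$ is $J$-invariant so that $J_N$ agrees with $J$ there, and compute directly that $\omega|_N(J_N-,J_N-) = \omega|_N(-,-)$, that $\omega|_N$ is closed as the restriction of a closed form, and that $\omega(J_N X, X) = \omega(JX', X') \geq 0$ with equality iff the $T_p\mathcal{F}^\perp$-component $X'$ vanishes, i.e.\ iff $X \in T_p\mathcal{F}$. I expect the main obstacle to be the passage from the transverse holomorphic foliation to a bona fide integrable complex structure on $N$: one must verify simultaneously that $\widetilde{\mathcal{F}}$ is transverse to $N$ and that it is holomorphic with holomorphic transverse transition functions, and it is exactly here that the Killing, Hamiltonian, and commutativity hypotheses are all consumed at once, through $L_{X_i}J = 0$ and the vanishing Nijenhuis tensor.
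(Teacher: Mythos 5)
Your proposal is correct and follows essentially the same route as the paper: the auxiliary commuting frame $Z_i = X_i - JY_i$, $W_i = JX_i + Y_i$ giving a holomorphic foliation $\widetilde{\mathcal{F}}$ transverse to $N$, the chart-level transport $J_N = \overline{i}^{-1}\circ\pi\circ J\circ i$, the identification of $J_N$ on the span of the $X_i, Y_i$ to see that $\mathcal{F}$ is holomorphic, and the K\"ahler-reduction computation on $T_p\mathcal{F}\oplus T_p\mathcal{F}^\perp$. (You even reproduce the paper's own sign slip in writing $W_i = Y_i - JX_i$ at one point; either sign convention yields the $J_N$-invariance of $\langle X_i, Y_i\rangle$, so nothing is affected.)
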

	We have the following corollary by definitions of the complex structure on $N$ and $\omega|_N$: 
	\begin{cor}\label{cor:reduction}
		In addition, if $X_1,\dots, X_n, Y_1,\dots, Y_n$ define an action of the torus $T$ on $M$, then  the quotient space $N/T$ is a K\"ahler orbifold. Moreover, the quotient map $N \to N/T$ is a holomorphic Seifert fibering. 
	\end{cor}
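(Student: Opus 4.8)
The plan is to observe that, once the commuting Killing Hamiltonian vector fields $X_1,\dots,X_n,Y_1,\dots,Y_n$ integrate to an action of the torus $T$, the foliation $\mathcal{F}$ on $N$ built above is exactly the orbit foliation of this action, and that the transverse K\"ahler structure furnished by Theorem \ref{thm:tK} is precisely the basic, $T$-invariant data that descends to a genuine K\"ahler structure on the orbit space $N/T$. Thus the whole argument is a matter of identifying leaves with orbits and checking that the transverse tensors are invariant enough to descend to orbifold charts.

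First I would record that the $T$-action on $N$ is locally free. In the proof of Theorem \ref{thm:tK} it was shown that $X_1(p),\dots,X_n(p),Y_1(p),\dots,Y_n(p)$ are linearly independent at every $p \in N$, so the infinitesimal action has trivial kernel and every stabilizer $T_p \subset T$ is finite. A locally free action of a compact torus makes $N/T$ an orbifold whose uniformizing charts are slices modulo the finite stabilizers, and makes the projection $N \to N/T$ a Seifert fibering; I would invoke the standard slice-theorem description here. Since the leaves of $\mathcal{F}$ are the $T$-orbits, the local leaf space of $\mathcal{F}$ coincides with $N/T$, so the transverse holomorphic charts $\{\phi_{\alpha1}|_{N\cap U_\alpha}\}$ descend to holomorphic orbifold charts: after shrinking, $\phi_{\alpha1}$ factors through the orbit projection, and the transition maps $g_{\alpha\beta}$ are holomorphic, so $N/T$ acquires a complex orbifold structure (with integrable $\bar J$) for which $N \to N/T$ is holomorphic with respect to $J_N$.

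Next I would descend the form. Because each $X_i,Y_i$ is Killing and Hamiltonian, $L_{X_i}\omega = L_{Y_i}\omega = 0$, so $\omega|_N$ is $T$-invariant; and it was shown above that $\omega|_N(-,Z) = 0$ for every $Z \in T\mathcal{F}$, i.e. $\omega|_N$ is horizontal. An invariant, horizontal, closed form is basic, hence descends to a closed $2$-form $\bar\omega$ on $N/T$, defined in a uniformizing chart via the isomorphism $T_p\mathcal{F}^\perp \cong T_{[p]}(N/T)$ by restriction of $\omega|_N$. The identity $J_N|_{T_p\mathcal{F}^\perp} = J|_{T_p\mathcal{F}^\perp}$ together with the transverse K\"ahler relations $\omega|_N(J_N-,J_N-) = \omega|_N(-,-)$ and $\omega|_N(J_NX,X)\geq 0$ (with equality iff $X \in T\mathcal{F}$) translate, on $T_p\mathcal{F}^\perp$, into the compatibility $\bar\omega(\bar J-,\bar J-)=\bar\omega$ and the positive-definiteness of $\bar\omega(\bar J-,-)$, the latter because a nonzero $X \in T_p\mathcal{F}^\perp$ never lies in $T\mathcal{F}$. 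Hence $(\bar\omega,\bar J)$ is K\"ahler in each chart, and as all data restrict from the global invariant objects on $N$ they patch to a K\"ahler orbifold structure; the holomorphicity of $N\to N/T$ is then immediate, since in the foliated charts the projection is the first-factor map $(z_\alpha,w_\alpha)\mapsto z_\alpha$.

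The only genuinely delicate point is the orbifold bookkeeping when the stabilizers $T_p$ are nontrivial finite groups: one must verify that the transverse holomorphic coordinates and the basic form are invariant under the induced $T_p$-action on a slice, so that they define honest tensors on the uniformizing charts and not merely on the free locus. This is unobstructed precisely because $J_N$, $\omega|_N$, and $\mathcal{F}$ are all $T$-invariant by construction, hence in particular $T_p$-invariant, which is exactly the condition needed for the descent to the orbifold charts to be well defined.
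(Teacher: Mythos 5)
Your proposal is correct and follows the route the paper intends: the paper simply asserts that the corollary follows ``by definitions of the complex structure on $N$ and $\omega|_N$,'' and what you have written is precisely the unpacking of that assertion (local freeness from the linear independence of the $X_i(p),Y_i(p)$, identification of the leaves of $\mathcal{F}$ with the $T$-orbits, descent of the basic form and of the transverse complex structure to slice charts modulo finite stabilizers). No gap; your last paragraph on invariance under the finite isotropy groups is exactly the point that makes the orbifold descent legitimate.
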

	\begin{rema}\label{rema:equivariant}
		If a group $G$ acts on $M$ and the action of $G$ preserves $J$, $\omega$ and $f_1,\dots, f_n, g_1,\dots, g_n$, then $N$, the complex structure $J_N$, the holomorphic foliation $\mathcal{F}$, and the transverse K\"ahler form $\omega|_N$ are invariant under the action of $G$. In particular, the holomorphic Seifert fibering $N \to N/T$ is $G$-equivariant. 
	\end{rema}

	\begin{rema}\label{rem-cstr}
		Let $A \in GL(2n,\R)$. Define $\Phi ' := (f_1',\dots, f_n', g_1',\dots, g_n') = (f_1,\dots, f_n, g_1,\dots, g_n) A$. Then the Hamiltonian vector fields $X_1',\dots, X_n', Y_1',\dots, Y_n'$ of $f_1',\dots, f_n', g_1',\dots, g_n'$ are given by $(X_1',\dots, X_n', Y_1',\dots, Y_n') = ( X_1,\dots, X_n, Y_1,\dots, Y_n)A$. Thus, they are again Killing and commute with each other. Consider $\alpha' := \alpha A$. Then, $\alpha'$ is a regular value of $\Phi'$ and $\Phi'^{-1}(\alpha') = \Phi^{-1}(\alpha) = N$. Replacing vector fields $X_1,\dots, X_n, Y_1,\dots, Y_n$ to $X_1',\dots, X_n', Y_1',\dots, Y_n'$, we obtain another holomorphic foliation transverse to $N$. Therefore, we obtain another complex structure on $N$.  
	Thus,we obtain the family of complex structures on $N$ parametrized by $GL(2n,\R)/GL(n,\C)$.
	By  the construction of the foliation $\mathcal{F}$,  the complex structures on the leaves  of  the holomorphic foliation $\mathcal{F}$ depend on the parameters.  However, the structure of  transverse K\"ahler ${C}^{\infty}$-foliation $\mathcal{F}$ on $N$ is independent.
	In particular, the K\"ahler orbifold structure on $N/T$ is unique.
	\end{rema}

\section{$SU(3)$ as a level set} \label{sec:levelset}
In this section, we embed $SU(3)$ into a K\"ahler manifold as a level set of a smooth map. Let $SL(3,\C)$ denote the complex special linear group of degree $3$. Let $B$  represent the subgroup of upper triangular matrices in $SL(3, \C)$. Let $U$ denote the subgroup of upper triangular matrices whose diagonal entries are $1$. We will construct a K\"ahler form on $SL(3,\C)/U$ explicitly. Let $V = \C^3$ and $\rho \co SL(3, \C) \to GL(V)$ be the natural representation. 
Let $V^*$ be the dual vector space of $V$. Let $e_1$, $e_2$, and $e_3$ be the standard basis vectors of $V$ and $e_1^*$, $e_2^*$, and $e_3^*$ are the dual basis vectors. Let $\rho^* \co SL(3,\C) \to GL(V^*)$ be the dual of $\rho$. For $A \in SL(3,\C)$, the linear map $\rho^*(A) \co V^* \to V^*$ is given by $(\rho^*(A)(f))(v) = f(\rho(A)^{-1}(v))$ for $f \in V^*$ and $v \in V$. The representation matrix of $\rho^*(A) \co V^* \to V^* $ with respect to the basis vectors $e_1^*$, $e_2^*$ and $e_3^*$ is the cofactor matrix ${}^t\! A^{-1}$. Namely, 
\begin{equation*}
	\begin{pmatrix}
		(\rho^*(A))(e_1^*) & (\rho^*(A))(e_2^*) & (\rho^*(A))(e_3^*) 
	\end{pmatrix}
	=
	\begin{pmatrix}
		e_1^* & e_2^* & e_3^*
	\end{pmatrix}
	{}^t\! A^{-1}.
\end{equation*}
\begin{lemm}\label{lemm:SLstab} The subgroup of stabilizers of $SL(3,\C)$ at $(e_1,e_3^*)$ coincides with $U$. 
\end{lemm}
\begin{proof}
	The subgroup of stabilizers of $SL(3,\C)$ at $e_1 \in V$ is of the form
	\begin{equation*}
		\left\{\begin{pmatrix}
		1 & a_{12} & a_{13}\\
		0 & a_{22} & a_{23}\\
		0 & a_{32} & a_{33}
		\end{pmatrix}
		\mid \det \begin{pmatrix}a_{22} & a_{23}\\ a_{32} & a_{33}\end{pmatrix} = 1\right\}.
	\end{equation*}
	Similarly, the subgroup of stabilizers of $SL(3,\C)$ at $e_3^* \in V^*$ is of the form
	\begin{equation*}
		\left\{\begin{pmatrix}
		a_{11} & a_{12} & a_{13}\\
		a_{21} & a_{22} & a_{23}\\
		0 & 0 & 1
		\end{pmatrix}
		\mid \det \begin{pmatrix}a_{11} & a_{12}\\ a_{21} & a_{22}\end{pmatrix} = 1\right\}.
	\end{equation*}
	The intersection of these subgroups coincides with $U$, proving the lemma.
\end{proof}
Thus, the homogeneous space $SL(3, \C)/U$ can be identified with the $SL(3,\C)$-orbit through $(e_1, e_3^*)$. 
\begin{lemm}\label{lemm:SLorb}
	The $SL(3,\C)$-orbit through $(e_1, e_3^*)$ is $\{(v,f) \in V \oplus V^* \mid f(v) =0, v\neq 0, f\neq 0\}$.
\end{lemm}
\begin{proof}
	Let $A \in SL(3,\C)$. Put $v = \rho (A)(e_1)$ and $f = \rho^*(A) (e_3^*)$. Then, $v \neq 0$, $f \neq 0$ and $f(v) = e_3^*(\rho(A)^{-1}\rho (A)(e_1)) = e_3^*(e_1) = 0$. Thus, the $SL(3,\C)$-orbit through $(e_1, e_3^*)$  is contained in $\{ v \oplus f \in V \oplus V^* \mid f(v) =0, v\neq 0, f\neq 0\}$.

	Let $(v,f) \in V \oplus V^*$ be such that $f(v) =0$, $v \neq 0$ and $f\neq 0$. Since $v \neq 0$, there exists $A_1 \in SL(3, \C)$ such that $\rho(A_1)(e_1) = v$. Then, $\rho^*(A_1)^{-1}(f)$ is a linear combination of $e_2^*$ and $e_3^*$ because $f(v) = 0$ and $\rho^*(A_1)^{-1}(f)(e_1) = f(\rho(A_1)(e_1))$. Let $a, b\in \C$ such that $\rho^*(A_1)^{-1}(f) = ae_2^* + be_3^*$. Since $f \neq 0$, either $a$ or $b$ is non-zero. Put $||f|| := \sqrt{|a|^2 + |b|^2}$ and 
	\begin{equation*}
		A_2 := \begin{pmatrix}
			1 & 0 & 0\\
			0 & ||f||^{-2}\overline{b} &  -||f||^{-2}\overline{a} \\
			0 & a & b
		\end{pmatrix}.
	\end{equation*}
	Then, $A_2 \in SL(3, \C)$ and $\rho^*(A_2)^{-1}(e_3^*) = ae_2^*+be_3^*$. Thus $f = \rho^*(A_1A_2^{-1})(e_3^*)$. 
	
	Moreover, $\rho(A_1A_2^{-1})(e_1) = \rho(A_1)(e_1) = v$. We found that 
	\[(v, f) = (\rho(A_1A_2^{-1})(e_1), \rho^*(A_1A_2^{-1})(e_3^*)),\] 
	showing that $(v, f)$ sits in the $SL(3, \C)$-orbit through $(e_1,e_3^*)$. 
\end{proof}

Let $M$ be the quasi-affine variety in $\C^6 = \C^3 \times \C^3$ defined as
\begin{equation*}
	M := \left\{ (z, w) \in \C^3 \times \C^3 \mid z \neq 0, w\neq 0, \sum_{j=1}^3 z_jw_j=0 \right\}
\end{equation*}
where $z = (z_1,z_2, z_3)$ and $w = (w_1,w_2, w_3)$. By Lemmas \ref{lemm:SLstab} and \ref{lemm:SLorb}, $SL(3,\C)/U$ is isomorphic to $M$ as varieties via the isomorphism given by $SL(3,\C)/ U \ni [A] \mapsto (z, w) \in M$, where 
\begin{equation} \label{eq:M}
	\begin{pmatrix}
		z_1\\ z_2\\ z_3\\ w_1\\ w_2\\ w_3 
	\end{pmatrix}
	= 
	\begin{pmatrix}
		A & O\\
		O & {}^tA^{-1}
	\end{pmatrix}
	\begin{pmatrix}
		1 \\ 0 \\ 0 \\ 0\\ 0 \\ 1
	\end{pmatrix}. 
\end{equation}
Since $M$ is quasi-affine, we have a K\"ahler form on $M$, which is obtained by restricting the standard K\"ahler form on $\C^6$. Moreover, $SL(3,\C)/U$ admits a $T \times T$-action introduced below. We  describe the inherited $T \times T$-action on $M$ and its moment map. 

For $g, h \in T$ and $A \in SL(3,\C)$, we define $(g, h) \cdot A := gAh^{-1}$. This is an action of $T \times T$ on $SL(3, \C)$, and it descends to an action on $SL(3,\C)/U$ because $hUh^{-1} = U$ for any $h \in T$. Suppose that $g = \diag (g_1, g_2, g_3)$ and $h = \diag (h_1, h_2, h_3)$. Then,
\begin{equation}\label{eq:weights}
	\begin{split}
	& \begin{pmatrix}
		gAh^{-1} & O \\
		O & {}^t(gAh^{-1})^{-1}
	\end{pmatrix}
	\begin{pmatrix}
		1 \\ 0 \\ 0 \\ 0 \\ 0 \\ 1
	\end{pmatrix}
	= \begin{pmatrix}
		g & O \\
		O & g^{-1}
	\end{pmatrix}
	\begin{pmatrix}
		A & O \\
		O & {}^tA^{-1}	
	\end{pmatrix}
	\begin{pmatrix}
		h_1^{-1} \\ 0 \\ 0 \\ 0 \\ 0 \\ h_3
	\end{pmatrix}\\
	& = 
	\begin{pmatrix}
		\diag (g_1h_1^{-1}, g_2h_1^{-1}, g_3h_1^{-1}) & O \\
		O & \diag (g_1^{-1}h_3, g_2^{-1}h_3, g_3^{-1}h_3) 
	\end{pmatrix}
	\begin{pmatrix}
		A & O \\
		O & {}^tA^{-1}
	\end{pmatrix}
	\begin{pmatrix}
		1 \\ 0 \\ 0 \\ 0 \\ 0 \\ 1
	\end{pmatrix}. 
	\end{split}
\end{equation}
	For a moment, let $z_1,\dots, z_6$ be the standard coordinates of $\C^6$. 
	Let $x_j = \Rea z_j$ and $y_j = \Ima z_j$ be the real coordinates. Let $\omega_{\mathrm{std}}$ be the standard K\"ahler form on $\C^6$. Then $\omega_{\mathrm{std}}$ is represented as $\omega_{\mathrm{std}} = \sum_{j=1}^6 dx_j \wedge dy_j$. For each one-parameter subgroup $\phi \co \R \to (S^1)^6$, there uniquely exists $(a_1,\dots, a_6)\in \R^6$ such that $\phi(t) = (e^{\sqrt{-1}a_1t}, \dots, e^{\sqrt{-1}a_6t})$ for $t \in \R$. By direct computation, a fundamental vector field $X_\phi$ on $\C^6$ corresponding to $\phi$ is represented as 
	\begin{equation*}
		X_\phi = \sum_{j=1}^6 a_j(-y_j\frac{\partial}{\partial x_j} + x_j\frac{\partial}{\partial y_j}). 
	\end{equation*}
	Thus 
	\begin{equation*}
		i_{X_\phi}\omega_{\mathrm{std}} = \sum_{j=1}^6 -a_j(y_jdy_j+x_jdx_j) = d\left(\sum_{j=1}^6-\frac{a_j}{2}|z_j|^2\right). 
	\end{equation*}
	Namely, a function of the form $\sum_{j=1}^6\frac{a_j}{2}|z_j|^2 +c$ for any constant $c \in \R$ is a Hamiltonian function of a fundamental vector field $X_\phi$ and vice versa. 
	
	From now on, let $z_1,z_2,z_3,w_1,w_2,w_3$ be the standard coordinates of $\C^6$. Let $(a_1,a_2,a_3,b_1,b_2,b_3) \in \R^6$. By definition of $M$, for a one-parameter subgroup $\phi \co \R \to (S^1)^6$ given by $\phi(t) = (e^{\sqrt{-1}a_1t},\dots, e^{\sqrt{-1}b_3t})$, $M$ is invariant under the action of $\phi$ if and only if $a_1+b_1= a_2+b_2=a_3+b_3$. Moreover, by direct computation we can see that a homomorphism
	\begin{equation*}
		T \times T \to \{(\alpha_1,\alpha_2,\alpha_3,\beta_1,\beta_2,\beta_3) \in (S^1)^6 \mid \alpha_1\beta_1=\alpha_2\beta_2=\alpha_3\beta_3 \}
	\end{equation*}
	given by 
	\begin{equation}\label{eq:TTS16}
		(g,h) \mapsto (g_1h_1^{-1}, g_2h_1^{-1}, g_3h_1^{-1}, g_1^{-1}h_3, g_2^{-1}h_3, g_3^{-1}h_3)
	\end{equation}
	for $g = \diag (g_1, g_2, g_3)$ and $h = \diag (h_1, h_2, h_3)$ is surjective. This together with \eqref{eq:M} and \eqref{eq:weights} yields that an action of a one-parameter subgroup of $T \times T$ on $M$ is nothing but an action of a one-parameter subgroup $\phi \co \R \to (S^1)^6$ given by $\phi(t) = (e^{\sqrt{-1}a_1t},\dots, e^{\sqrt{-1}b_3t})$ for some $(a_1,a_2,a_3,b_1,b_2,b_3) \in \R^6$ with $a_1+b_1= a_2+b_2=a_3+b_3$. From now on we assume that $a_1+b_1= a_2+b_2=a_3+b_3$. Then the fundamental vector field $X_\phi$ on $\C^6$ corresponding to $\phi$ tangents to $M$. Let $\iota \co M \to \C^6$ be the inclusion. Let $\omega$ be a K\"ahler form on $M$ obtained by restricting the standard K\"ahler form $\omega_{\mathrm{std}}$ on $\C^6$. Namely, $\iota^*\omega_{\mathrm{std}} = \omega$. Let $f_\phi \co \C^6 \to \R$ be a Hamiltonian function of $X_\phi$ on $\C^6$. Then
	\begin{equation*}
		\begin{split}
			i_{X_\phi|_M}\omega&= i_{X_\phi|_M}\iota^*\omega_{\mathrm{std}}\\
			&=\iota^*i_{X_\phi}\omega_{\mathrm{std}}\\
			&=\iota^*(-df_\phi)\\
			&=-d\iota^*f_\phi.
		\end{split}
	\end{equation*}
	Namely, $f_\phi|_M$ is a Hamiltonian function of $X_\phi|_M$. We state this observation as a proposition for later use. 
\begin{prop}\label{prop:Hamiltonian}
	Let $\omega$ be a K\"ahler form on $M$ obtained by restricting the standard K\"ahler form $\omega_{\mathrm{std}}$ on $\C^6$.
	For a fundamental vector field $X$ of $T \times T$-action on $M$, the Hamiltonian function $f$ of $X$ with respect to $\omega$ can be represented as 
	\begin{equation*}
		f(z, w) = \sum_{j=1}^3 (a_j|z_j|^2 + b_j|w_j|^2) + c
	\end{equation*}
	for some real numbers $a_j, b_j$, $j=1,2,3$ with $a_1+ b_1 = a_2 + b_2 = a_3+b_3$ and $c$. The Hamiltonian vector field of such a function is a fundamental vector field of the $T \times T$-action on $M$. 
\end{prop}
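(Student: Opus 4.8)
The plan is to realize every fundamental vector field of the $T \times T$-action on $M$ as the restriction of a fundamental vector field of the ambient standard $(S^1)^6$-action on $\C^6$, for which the Hamiltonian functions are explicit, and then to pin down the precise image of $\Lie(T \times T)$ inside $\Lie((S^1)^6) = \R^6$. The characterization $a_1 + b_1 = a_2 + b_2 = a_3 + b_3$ will come out as the defining equations of that image.

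First I would recall that, for the standard K\"ahler form on $\C^6$ with coordinates $\zeta = (z_1, z_2, z_3, w_1, w_2, w_3)$, the fundamental vector field of the standard $(S^1)^6$-action associated with $\eta = (\eta_1, \dots, \eta_6) \in \R^6$ is a Killing Hamiltonian vector field whose Hamiltonian is $\kappa\sum_{j=1}^6 \eta_j |\zeta_j|^2$ up to an additive constant, where $\kappa \neq 0$ is a fixed normalization scalar. Since $M$ is invariant under this subtorus and the K\"ahler form on $M$ is the restriction of the ambient one, restricting the identity $-i_X\omega = df$ to vectors tangent to $M$ shows that the restriction $X|_M$ is the Hamiltonian vector field on $M$ of $f|_M$; conversely the restriction of the ambient Hamiltonian vector field is tangent to $M$ and, by nondegeneracy of $\omega|_M$, coincides with the Hamiltonian vector field of $f|_M$. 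Thus everything reduces to analyzing the composite homomorphism $\iota \co T \times T \to (S^1)^6$ read off from the weights in \eqref{eq:weights}.

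Next I would differentiate $\iota$. Writing an element of $\Lie(T \times T)$ as $\xi = ((s_1,s_2,s_3),(u_1,u_2,u_3))$ with $\sum_j s_j = \sum_j u_j = 0$, the weights $g_jh_1^{-1}$ on the $z_j$ and $g_j^{-1}h_3$ on the $w_j$ in \eqref{eq:weights} give $d\iota(\xi) = (s_1 - u_1,\, s_2 - u_1,\, s_3 - u_1,\, -s_1 + u_3,\, -s_2 + u_3,\, -s_3 + u_3)$. Hence the fundamental vector field of $\xi$ has Hamiltonian $\sum_{j=1}^3(a_j|z_j|^2 + b_j|w_j|^2) + c$ with $a_j = \kappa(s_j - u_1)$ and $b_j = \kappa(-s_j + u_3)$, and one reads off $a_j + b_j = \kappa(u_3 - u_1)$, which is independent of $j$. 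This establishes the first assertion and the existence of the representation.

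For the converse I would run a dimension count. The linear subspace of $\R^6$ cut out by $a_1 + b_1 = a_2 + b_2 = a_3 + b_3$ is $4$-dimensional, and the image of $d\iota$ lies inside it by the computation above. Since $\dim(T \times T) = 4$, it suffices to check that $d\iota$ is injective: if $a_j = b_j = 0$ for all $j$, then $s_j = u_1$ for every $j$, which together with $\sum_j s_j = 0$ forces $u_1 = 0$ and $s_j = 0$, and then $b_j = 0$ gives $u_3 = 0$, whence $u_2 = 0$ by $\sum_j u_j = 0$. Injectivity makes the image $4$-dimensional, hence equal to the full constraint subspace. Therefore any function of the stated form is, up to the constant $c$, the Hamiltonian of some $\xi \in \Lie(T \times T)$, and its Hamiltonian vector field on $M$ is the corresponding fundamental vector field. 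The only point demanding care is the bookkeeping of the weight exponents and signs in $d\iota$; once these are fixed, the argument is a routine identification of the $4$-dimensional subalgebra $\Lie(T \times T)$ with the $4$-dimensional linear slice $\{a_1+b_1 = a_2+b_2 = a_3+b_3\}$ of $\R^6$.
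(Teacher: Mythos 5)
Your argument is correct and is essentially the same as the paper's: the paper deduces the proposition directly from the weights displayed in \eqref{eq:weights}, i.e., from the fact that the $T\times T$-action on $M$ is the restriction of a linear subtorus action on $\C^6$ whose Hamiltonians are linear combinations of the $|z_j|^2$ and $|w_j|^2$. Your explicit computation of $d\iota$ and the dimension count identifying the image with the slice $\{a_1+b_1=a_2+b_2=a_3+b_3\}$ simply fill in the details the paper leaves to the reader.
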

\begin{rema}\label{rema:flow}
	Let $f \co M \to \R$ be the function as in Proposition \ref{prop:Hamiltonian}. The flow of the Hamiltonian vector field of $f$ is given by
	\begin{equation*}
		((z,w),t) \mapsto (e^{2a_1t\sqrt{-1}}z_1, \dots, e^{2b_3t\sqrt{-1}}w_3)
	\end{equation*}
	for $(z,w) \in M$ and $t \in \R$. 
\end{rema}
	
	Let $f_1, f_2 \co M \to \R$ be functions defined by 
	\begin{equation*}
		f_1(z,w) := \sum_{j=1}^3 |z_j|^2, \quad f_2(z,w) := \sum_{j=1}^3 |w_j|^2.
	\end{equation*}
	The flow of the Hamiltonian vector field of $f_1$ is given by 
	\begin{equation*}
		((z,w),t) \mapsto (e^{2t\sqrt{-1}}z_1, e^{2t\sqrt{-1}}z_2, e^{2t\sqrt{-1}}z_3, w_1,w_2,w_3)
	\end{equation*}
	for $(z,w) \in M$ and $t \in \R$. By \eqref{eq:TTS16}, this flow is derived from an action of a one-parameter subgroup $\phi_1 \co \R \to T \times T$ on $M$ given by 
	\begin{equation*}
		\phi_1(t) = (\diag(1,1,1), \diag(e^{-2t\sqrt{-1}},e^{2t\sqrt{-1}}, 1)). 
	\end{equation*}
	Also, the flow of the Hamiltonian vector field of $f_2$ is given by 
	\begin{equation*}
		((z,w),t) \mapsto (z_1,z_2, z_3, e^{2t\sqrt{-1}}w_1,e^{2t\sqrt{-1}}w_2,e^{2t\sqrt{-1}t}w_3)
	\end{equation*}
	for $(z,w) \in M$ and $t \in \R$. By \eqref{eq:TTS16}, this flow is derived from an action of a one-parameter subgroup $\phi_2 \co \R \to T \times T$ on $M$ given by 
	\begin{equation*}
		\phi_2(t) = (\diag(1,1,1), \diag(1,e^{-2t\sqrt{-1}},e^{2t\sqrt{-1}})). 
	\end{equation*}
	Therefore, $f_1$ and $f_2$ generate the action of $T \times T$ on $M$ restricted to $\{1\} \times T$.
	\begin{prop}\label{prop:SU(3)}
		We embed $SU(3)$ into $SL(3,\C)/U$ via the map $A \mapsto [A]$ for $A \in SU(3)$. Via the isomorphism between $SL(3,\C)/U$ and $M$ given by \eqref{eq:M}, the preimage of $(1,1)$ by $(f_1,f_2)$ coincides with $SU(3)$ in $SL(3,\C)/U$. 
	\end{prop}
	\begin{proof}
		For any $A \in SU(3)$, the matrices $A$ and ${}^tA^{-1}$ preserve the standard norm of $\C^3$. 
		Thus the inclusion $SU(3) \subset (f_1,f_2)^{-1}(1,1)$ holds. We show that $SU(3)$ is open and closed in $(f_1,f_2)^{-1}(1,1)$ and $(f_1,f_2)^{-1}(1,1)$ is connected.
		Let $\pi \co \C^6 = \C ^3 \times \C^3 \to \C^3$ be the first projection. Then $\pi|_{(f_1,f_2)^{-1}(1,1)} \co (f_1,f_2)^{-1}(1,1) \to S^5$ is a $S^3$-bundle over $S^5$. In particular, $(f_1,f_2)^{-1}(1,1)$ is a connected manifold of dimension $8$. Since the dimension of $SU(3)$ is the same as $(f_1,f_2)^{-1}(1,1)$, $SU(3)$ is open in $(f_1,f_2)^{-1}(1,1)$. It follows from the compactness of $SU(3)$ that $SU(3)$ is closed in $(f_1,f_2)^{-1}(1,1)$. Therefore $SU(3) = (f_1,f_2)^{-1}(1,1)$, proving the proposition. 
	\end{proof}

\section{Intersection of real quadrics in $M$}\label{sec:intersection}
	In this section, we provide sufficient conditions for level sets of $M$ by commuting Hamiltonians to be nonempty, regular, and compact. Let $A_1, A_2, A_3, B_1, B_2, B_3,C \in \R^2$ be such that $A_1 + B_1 = A_2 + B_2 = A_3 +B_3$, and let $\Phi \co M \to \R^2$ be the map defined by
	\begin{equation*}
		\Phi (z,w) = \sum_{j=1}^3 (A_j|z_j|^2 + B_j|w_j|^2). 
	\end{equation*}
	\begin{prop}\label{prop:nonempty}
		If the condition 
		\begin{itemize}
			\item[\bf (N)] There exist $i, j \in \{1,2,3\}$ and $a, b>0$ such that $i\neq j$ and $C = aA_i + bB_j$
		\end{itemize}
		is fulfilled, and then $\Phi^{-1}(C)$ is nonempty. 
	\end{prop}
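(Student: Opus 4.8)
The plan is to produce an explicit point of $M$ at which $\Phi$ equals $C$, so that nonemptiness follows simply by exhibiting a single element of the level set. Condition (N) hands us indices $i \neq j$ in $\{1,2,3\}$ and positive reals $a, b$ with $C = aA_i + bB_j$. Guided directly by this data, I would take $(z,w) \in \C^3 \times \C^3$ whose only nonzero entries are $z_i := \sqrt{a}$ and $w_j := \sqrt{b}$, all remaining coordinates being $0$.

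First I would verify that $(z,w) \in M$, which is where the hypothesis $i \neq j$ does all the work. Since the supports $\{i\}$ of $z$ and $\{j\}$ of $w$ are disjoint, for each index $k$ at least one of $z_k, w_k$ vanishes, and hence $\sum_{k=1}^3 z_k w_k = 0$; thus the quadratic constraint cutting out $M$ is satisfied automatically. The positivity $a, b > 0$ gives $z_i \neq 0$ and $w_j \neq 0$, so $z \neq 0$ and $w \neq 0$, completing the check that $(z,w) \in M$. Evaluating the map, only the $i$-th term of the $z$-summand and the $j$-th term of the $w$-summand survive, so $\Phi(z,w) = A_i |z_i|^2 + B_j |w_j|^2 = aA_i + bB_j = C$, and therefore $(z,w) \in \Phi^{-1}(C)$.

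I expect no genuine obstacle here: the assertion is combinatorial rather than analytic, and the entire argument reduces to choosing one well-placed point. The only subtlety worth flagging is that the same recipe breaks down for $i = j$, since then $z$ and $w$ would share the coordinate $i$ and the orthogonality relation would force $z_i w_i = 0$, contradicting $z_i, w_i \neq 0$. This explains precisely why (N) insists on $i \neq j$, and why a point supported on just two coordinates suffices, with no need to distribute the prescribed mass across several coordinates.
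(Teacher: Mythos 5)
Your proposal is correct and uses exactly the construction in the paper's own proof: set $z_i=\sqrt{a}$, $w_j=\sqrt{b}$, and all other coordinates to zero, then observe that disjoint supports force $\sum_k z_kw_k=0$ and that $\Phi(z,w)=aA_i+bB_j=C$. Your added remark on why $i\neq j$ is essential is a sensible elaboration, but the argument is the same.
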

	\begin{proof} 
		By the assumption, there exists $a, b \in \R_{>0}$ such that $C = aA_i+bB_j$. Put 
		\begin{equation*}
			z_k = \begin{cases}
				\sqrt{a} & \text{if $k=i$}, \\
				0 & \text{otherwise,}
			\end{cases}\quad 
			w_l = \begin{cases}
				\sqrt{b} & \text{if $l=j$}, \\
				0 & \text{otherwise.}
			\end{cases}
		\end{equation*}
		Then $(z_1,z_2,z_3,w_1,w_2,w_3) \in M$ and $\Phi(z,w) = C$. Therefore $\Phi^{-1}(C)$ is nonempty.
	\end{proof}

	\begin{prop}\label{prop:regular}
		If the condition
		\begin{itemize}
			\item[\bf (R)] $A_i$ and $B_j$ are linearly independent if $i\neq j$
		\end{itemize}
		is fulfilled, then $C$ is a regular value of $\Phi$. 
	\end{prop}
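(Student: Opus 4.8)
The plan is to reduce regularity to an algebraic system via Lagrange multipliers and then refute that system using condition (R). Fix $(z,w)\in\Phi^{-1}(C)$ and write $q(z,w)=\sum_{j=1}^3 z_jw_j$, so that $M$ is cut out in $\C^6$ by $q=0$ away from $z=0,w=0$; since $z\neq 0$ we have $dq\neq 0$, whence $T_{(z,w)}M=\ker dq$ and its real annihilator is $\operatorname{span}_\R\{\Rea dq,\Ima dq\}$. The value $C$ fails to be regular at $(z,w)$ exactly when $d\Phi$ is not surjective on $T_{(z,w)}M$, i.e. when there is a nonzero $\lambda=(\lambda_1,\lambda_2)\in\R^2$ with $d(\lambda\cdot\Phi)$ vanishing on $\ker dq$. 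Setting $\alpha_j:=\lambda\cdot A_j$ and $\beta_j:=\lambda\cdot B_j$, a Wirtinger computation gives $d(\lambda\cdot\Phi)=\Rea\sum_j(2\alpha_j\bar z_j\,dz_j+2\beta_j\bar w_j\,dw_j)$, so $d(\lambda\cdot\Phi)$ lands in $\operatorname{span}_\R\{\Rea dq,\Ima dq\}$ precisely when there exists a single $\mu\in\C$ with
\begin{equation*}
2\alpha_j\bar z_j=\mu w_j,\qquad 2\beta_j\bar w_j=\mu z_j\qquad(j=1,2,3).
\end{equation*}
I would take this system, together with $\lambda\neq 0$, $\sum_j z_jw_j=0$ and $(z,w)\neq 0$, as the configuration to be shown impossible under (R).

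Next I would organize everything around the supports $I:=\{j:z_j\neq 0\}$ and $J:=\{j:w_j\neq 0\}$, both nonempty, noting first that they cannot both equal the same singleton $\{k\}$, since otherwise $\sum_j z_jw_j=z_kw_k\neq 0$. In the degenerate case $\mu=0$ the system forces $\alpha_j=0$ for every $j\in I$ and $\beta_j=0$ for every $j\in J$; since $I$ and $J$ are not a common singleton one may pick $i\in I$ and $j\in J$ with $i\neq j$, giving $\lambda\cdot A_i=\lambda\cdot B_j=0$ with $\lambda\neq 0$, which contradicts the linear independence of $A_i,B_j$ in (R). The same contradiction arises when $\mu\neq 0$ and $I\cap J=\emptyset$: there the system again yields $\alpha_j=0$ on $I$ and $\beta_j=0$ on $J$, and the disjoint nonempty supports supply the required $i\neq j$.

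The remaining and most delicate case is $\mu\neq 0$ with $I\cap J\neq\emptyset$. For each $j\in I\cap J$ I would substitute one equation into the other to obtain $|\mu|^2=4\alpha_j\beta_j$, so that $\alpha_j\beta_j>0$ is a fixed positive constant. Combining this with the standing normalization $A_1+B_1=A_2+B_2=A_3+B_3$, which makes $\gamma:=\alpha_j+\beta_j$ independent of $j$, forces $\gamma\neq 0$ (a zero sum with positive product would make $\alpha_j,\beta_j$ non-real) and makes every $\alpha_j$ with $j\in I\cap J$ share the sign of $\gamma$. On the other hand, multiplying $2\alpha_j\bar z_j=\mu w_j$ by $z_j$ and summing over $j$ turns $\sum_j z_jw_j=0$ into $\sum_{j\in I\cap J}\alpha_j|z_j|^2=0$, which is impossible for nonzero same-sign coefficients $\alpha_j$ paired with $|z_j|^2>0$. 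This contradiction closes the case analysis and shows that $C$ is regular.

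I expect the main obstacle to be twofold. First, the Wirtinger bookkeeping in the opening paragraph must be done carefully so that the transversality condition is faithfully encoded by the displayed system, with the conjugations and the role of the \emph{single} complex multiplier $\mu$ correctly placed. Second, in the last case condition (R) by itself is insufficient, and the real crux is realizing that the normalization $A_j+B_j$ constant has to be invoked to pin down the common sign of the $\alpha_j$ over $I\cap J$ and thereby collide with the vanishing of $\sum_{j\in I\cap J}\alpha_j|z_j|^2$ coming from the quadric relation.
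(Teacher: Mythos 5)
Your proof is correct, but it takes a genuinely different route from the paper's. The paper never writes down a Lagrange--multiplier system: using Proposition \ref{prop:Hamiltonian} it regards the two components of $\Phi$ as Hamiltonians of fundamental vector fields of the $T\times T$-action on $M$, picks $i_0$ with $z_{i_0}\neq 0$ and $j_0\neq i_0$ with $w_{j_0}\neq 0$ exactly as you do (using $\sum_j z_jw_j=0$), notes that linear independence of $A_{i_0}$ and $B_{j_0}$ forces the infinitesimal isotropy of the induced $\R^2$-action at $(z,w)$ to be trivial, and then invokes the standard moment-map fact that a point is regular for the moment map if and only if the fundamental vector fields are linearly independent there. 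You replace that symplectic step by an explicit analysis of when $d(\lambda\cdot\Phi)$ lies in the annihilator $\operatorname{span}_{\R}\{\Rea dq,\Ima dq\}$ of $T_{(z,w)}M$, which is why you are forced into the extra case $\mu\neq 0$, $I\cap J\neq\emptyset$; that case is invisible in the paper's treatment because for the complex submanifold $M\subset\C^6$ a vector tangent to $M$ and $\omega$-orthogonal to $T_{(z,w)}M$ must vanish, so the multiplier $\mu$ never appears. Your observation that condition {\bf (R)} alone does not close that case and that the normalization $A_1+B_1=A_2+B_2=A_3+B_3$ must be invoked (via $\alpha_j\beta_j=|\mu|^2/4>0$ and the constancy of $\alpha_j+\beta_j$) is accurate and consistent with the paper: that same normalization is what makes the $\R^2$-flow preserve the quadric $q=0$ and hence makes the moment-map formalism applicable, so the paper consumes it through Proposition \ref{prop:Hamiltonian} rather than through a sign argument. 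The trade-off is that your proof is elementary and self-contained, at the cost of a case analysis, while the paper's is shorter and makes the role of the torus action transparent; all of your individual steps (the Wirtinger bookkeeping, the reduction of $\sum_j\alpha_j|z_j|^2$ to the support $I\cap J$, and the sign contradiction) check out.
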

	\begin{proof}
		If $\Phi^{-1}(C) = \emptyset$, then $C$ is a regular value of $\Phi$. Suppose that $\Phi ^{-1}(C) \neq \emptyset$. Let $(z,w) \in \Phi^{-1}(C)$. 
		
		Since $(z,w) \in M$, there exists a pair $(i_0, j_0) \in \{1,2,3\}^2$ such that $z_{i_0}, w_{j_0} \neq 0$. We may choose $(i_0, j_0)$ so that $z_{i_0}, w_{j_0} \neq 0$ and $i_0 \neq j_0$ because $\sum_{j=1}^3 z_jw_j=0$. Let $f, g \co M \to \R$ be the first and second components of $\Phi$. Let $X$ and $Y$ be the Hamiltonian vector fields of $f$ and $g$, respectively. By Proposition \ref{prop:Hamiltonian}, $X$ and $Y$ are fundamental vector fields of the action of $T \times T$ on $M$. Since $X$ and $Y$ are Hamiltonian vector fields of $f$ and $g$, we found that $(z,w)$ is a regular point of $\Phi$ if and only if $X_{(z,w)}, Y_{(z,w)} \in T_{(z,w)}M$ are linearly independent. Since $A_{i_0}$ and $B_{j_0}$ are linearly independent and $z_{i_0}, w_{j_0}\neq 0$, we found that the isotropy subgroup at $(z,w)$ of the $\R^2$-action generated by $X$ and $Y$ is discrete (see Remark \ref{rema:flow}). Thus,  $X_{(z,w)}, Y_{(z,w)}$ are linearly independent. Therefore, $(z,w)$ is a regular point of $\Phi$. Since $(z,w)$ is arbitrary, we have that $C$ is a regular value of $\Phi$. 
	\end{proof}

	\begin{prop}\label{prop:compact}
		If the condition 
		\begin{enumerate}
			\item [\bf (C)] 
			\begin{itemize}
				\item $C \notin \cone (A_1, A_2, A_3), \cone (B_1,B_2,B_3)$,
				\item $\cone (A_1, A_2, A_3, B_1,B_2, B_3)$ has an apex, 
				\item $A_j, B_j \neq 0$ for all $j$
			\end{itemize}
		\end{enumerate}
		is fulfilled, then $\Phi^{-1} (C)$ is compact. 
	\end{prop}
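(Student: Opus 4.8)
The plan is to show that $N := \Phi^{-1}(C)$ is both bounded and closed as a subset of $\C^6$, whence compactness follows at once. It is convenient first to extend $\Phi$ to the continuous map $\widetilde{\Phi} \co \C^6 \to \R^2$ defined by the same formula $\widetilde{\Phi}(z,w) = \sum_{j=1}^3 (A_j|z_j|^2 + B_j|w_j|^2)$, now dropping the constraints that cut out $M$. Then $\widetilde{\Phi}^{-1}(C)$ is automatically closed in $\C^6$, and writing $r_j := |z_j|^2 \ge 0$, $s_j := |w_j|^2 \ge 0$ we see that every value $\widetilde{\Phi}(z,w)$ lies in $\cone(A_1,A_2,A_3,B_1,B_2,B_3)$, a viewpoint modeled on the LVM treatment of real quadrics. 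We have $N = \widetilde{\Phi}^{-1}(C) \cap M$, so the task splits into a boundedness estimate and a control of degenerations.

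For boundedness I would invoke the apex hypothesis. Since $\sigma := \cone(A_1,A_2,A_3,B_1,B_2,B_3)$ has an apex, it is strongly convex, so its dual cone has nonempty interior; hence there is a linear functional $\xi \in (\R^2)^*$ with $\xi(v) > 0$ for every nonzero $v \in \sigma$. Because $A_j, B_j \neq 0$ for all $j$, the six numbers $\xi(A_j), \xi(B_j)$ are all strictly positive; let $m > 0$ denote their minimum. Applying $\xi$ to the relation $\widetilde{\Phi}(z,w) = C$ yields
\[
	m \sum_{j=1}^3 \left(|z_j|^2 + |w_j|^2\right) \le \sum_{j=1}^3 \left(\xi(A_j)|z_j|^2 + \xi(B_j)|w_j|^2\right) = \xi(C),
\]
so $\sum_{j=1}^3 (|z_j|^2 + |w_j|^2) \le \xi(C)/m$ holds throughout $\widetilde{\Phi}^{-1}(C)$. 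In particular $\widetilde{\Phi}^{-1}(C)$ is bounded, hence compact, and therefore so is its intersection $K := \widetilde{\Phi}^{-1}(C) \cap \{\sum_{j=1}^3 z_jw_j = 0\}$ with the closed quadric. Here $N = K \cap \{z \neq 0\} \cap \{w \neq 0\}$.

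It remains to prove that $N$ is closed in $\C^6$, and this is the step where the cone conditions on $C$ do the real work; I expect it to be the main obstacle, since $M$ is only quasi-affine and a priori $N$ could acquire spurious limit points on the loci $z=0$ or $w=0$. I would take a sequence in $N$ converging to some $(z,w) \in K$ and rule out both degenerations. If $z = 0$, then $C = \widetilde{\Phi}(0,w) = \sum_{j=1}^3 B_j|w_j|^2 \in \cone(B_1,B_2,B_3)$, contradicting the hypothesis $C \notin \cone(B_1,B_2,B_3)$; symmetrically, $w = 0$ forces $C \in \cone(A_1,A_2,A_3)$, which is likewise excluded. Thus every limit point of $N$ already satisfies $z \neq 0$ and $w \neq 0$, so it lies in $M$ and in $N$; hence $N$ is closed. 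Being closed and bounded in $\C^6 \cong \R^{12}$, it is compact. The crux is the clean dichotomy that a degeneration $z \to 0$ (resp. $w \to 0$) would push $C$ into $\cone(B_\bullet)$ (resp. $\cone(A_\bullet)$), precisely the situation forbidden by condition (C).
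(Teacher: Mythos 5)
Your proposal is correct and follows essentially the same route as the paper: boundedness via a linear functional positive on all six generators (guaranteed by the apex and the nonvanishing of the $A_j, B_j$), and closedness by extending $\Phi$ to all of $\C^6$ and using the conditions $C \notin \cone(A_1,A_2,A_3), \cone(B_1,B_2,B_3)$ to exclude limit points with $z=0$ or $w=0$. No further comment is needed.
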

	\begin{proof}
		Since $\cone (A_1, A_2, A_3, B_1,B_2, B_3)$ has an apex and $A_i, B_i \neq 0$ for all $i$, there exists a linear function $\alpha \co \R^2 \to \R$ such that $\alpha(A_i)>0$ and $\alpha(B_i)>0$ for all $i$. Applying $\alpha$ to the equation $\Phi(z,w) = C$, we have  $\sum_{i=1}^3\alpha(A_i)|z_i|^2 + \sum_{j=1}^3\alpha(B_j)|w_j|^2 = \alpha(C)$. Since  all coefficients are positive,  $\Phi^{-1}(C)$ is bounded in $\C^3 \times \C^3$. We show that $\Phi^{-1}(C)$ is closed in $\C^3 \times \C^3$. Define the map $\overline{\Phi} \co \C^3 \times \C^3 \to \R^2$  by 
		\begin{equation*}
			\overline{\Phi} (z,w) = \sum_{j=1}^3 (A_j|z_j|^2 + B_j|w_j|^2)
		\end{equation*}
		for $(z,w) \in \C^3 \times \C^3$. Let $(z,w) \in \overline{\Phi}^{-1}(C)$. Since $C \notin \cone (A_1,A_2,A_3)$,  we have $w \neq 0$. Similarly, we have $z \neq 0$. Therefore $\overline{\Phi}^{-1}(C)$ does not intersect with $\{0\} \times \C^3$ and $\C^3 \times \{0\}$. The level set $\Phi^{-1}(C)$ coincides with the intersection of the closed subsets $\overline{\Phi}^{-1}(C)$ and $\{(z,w) \in \C^3 \times \C^3 \mid \sum_{j=1}^3z_jw_j=0\}$. Thus $\Phi^{-1}(C)$ is closed in $\C^3 \times \C^3$.  Since $\Phi^{-1}(C)$ is closed and bounded in the Euclidean space $\C^3 \times \C^3$,  $\Phi^{-1}(C)$ is compact. 
	\end{proof}
	
	\begin{lemm}\label{lemm:star}
		If the condition
		\begin{itemize}
			\item[($\star$)] $C \notin \cone (A_i, A_j), \cone (B_i, B_j)$ for all $i,j \in \{1,2,3\}$ and $C \in \cone (A_i, B_j)$ for all $i,j \in \{1,2,3\}$
		\end{itemize}
		is fulfilled, then {\bf (N)}, {\bf (R)}, and {\bf (C)} are fulfilled. 
	\end{lemm}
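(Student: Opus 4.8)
The plan is to derive \textbf{(N)}, \textbf{(R)} and \textbf{(C)} from ($\star$) one at a time, keeping careful track of the planar combinatorics of cones. Two elementary observations will be used throughout. First, since $0 \in \cone(A_i, A_j)$ for every $i,j$ (take both coefficients zero), the hypothesis $C \notin \cone(A_i,A_j)$ forces $C \neq 0$. Second, reading ($\star$) with $i=j$ yields $C \notin \cone(A_j)$ and $C \notin \cone(B_j)$ for every $j$, because the degenerate cone $\cone(A_j,A_j)$ is exactly the ray through $A_j$. I would first record the third bullet of \textbf{(C)}: if some $A_k = 0$, then $\cone(A_k,B_j) = \cone(B_j)$, so the inclusion $C \in \cone(A_k,B_j)$ coming from ($\star$) would put $C$ on the ray through $B_j$, contradicting $C \notin \cone(B_j)$; symmetrically $B_k \neq 0$. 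Thus $A_j, B_j \neq 0$ for all $j$.

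For \textbf{(N)} I fix any $i \neq j$ and use $C \in \cone(A_i, B_j)$ to write $C = aA_i + bB_j$ with $a,b \geq 0$; both coefficients must be strictly positive, since $a=0$ would give $C \in \cone(B_j)$ and $b=0$ would give $C \in \cone(A_i)$, both excluded. For \textbf{(R)} I argue by contradiction: if $A_i, B_j$ were linearly dependent for some $i \neq j$, then, as both are nonzero, $B_j = \mu A_i$ with $\mu \neq 0$. If $\mu > 0$, the cone $\cone(A_i,B_j)$ collapses to the ray through $A_i$, so $C \in \cone(A_i)$, a contradiction; if $\mu < 0$, the cone becomes the whole line $\R A_i$, whence $C = (a + b\mu)A_i$, and according to the sign of $a+b\mu$ this places $C$ on the ray through $A_i$, on the ray through $B_j$, or at the origin, each impossible. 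Hence $A_i, B_j$ are linearly independent whenever $i \neq j$.

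For the first bullet of \textbf{(C)} I would invoke the conical form of Carathéodory's theorem in $\R^2$: every point of $\cone(A_1,A_2,A_3)$ is a nonnegative combination of at most two of the $A_i$, so $C \in \cone(A_1,A_2,A_3)$ would force $C \in \cone(A_i,A_j)$ for some pair $i,j$, contradicting ($\star$); the identical argument rules out $C \in \cone(B_1,B_2,B_3)$.

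The main obstacle is the remaining bullet of \textbf{(C)}, the apex (pointedness) of $\cone(A_1,A_2,A_3,B_1,B_2,B_3)$, since this must be upgraded from the pairwise data in ($\star$) to a statement about all six vectors at once. I would pass to angular coordinates, measuring the direction $\theta(V) \in (-\pi,\pi]$ of each nonzero vector relative to $C$, so that $C$ sits at angle $0$. For $i \neq j$, the membership $C \in \cone(A_i,B_j)$ with $A_i, B_j$ linearly independent (from \textbf{(R)}) and both coefficients positive (from \textbf{(N)}) means $C$ lies strictly inside a pointed sector of angle less than $\pi$; this forces $\theta(A_i)$ and $\theta(B_j)$ to have opposite signs and to satisfy $|\theta(A_i)| + |\theta(B_j)| < \pi$. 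Collecting the six off-diagonal sign conditions shows that all the $A_i$ share one sign and all the $B_j$ the opposite sign, say $\theta(A_i) \in (0,\pi)$ and $\theta(B_j) \in (-\pi,0)$. The delicate step is the diagonal: the $i=i$ instances $C \in \cone(A_i,B_i)$ likewise force $\theta(A_i) + |\theta(B_i)| < \pi$, for otherwise $C$ would fall outside the short sector determined by $A_i$ and $B_i$ (and when $A_i, B_i$ are antipodal the cone degenerates to a line avoiding $C$). Hence $\theta(A_i) + |\theta(B_j)| < \pi$ for all pairs, so with $\theta_A := \max_i \theta(A_i)$ and $\theta_B := \max_j |\theta(B_j)|$ we get $\theta_A + \theta_B < \pi$. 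All six directions then lie strictly within $\pi/2$ of the angle $(\theta_A - \theta_B)/2$, so the linear functional with inner normal at that angle is positive on every generator; this exhibits the apex and completes \textbf{(C)}, establishing that ($\star$) implies \textbf{(N)}, \textbf{(R)} and \textbf{(C)}.
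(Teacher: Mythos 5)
Your proof is correct, and it fills in all the steps the paper dismisses as obvious: you derive $C\neq 0$, $C\notin\cone(A_j),\cone(B_j)$, the nonvanishing of the $A_j,B_j$, and conditions \textbf{(N)} and \textbf{(R)} carefully from the $i=j$ and degenerate-cone cases of $(\star)$, and your use of Carath\'eodory's theorem for the first bullet of \textbf{(C)} is exactly the paper's argument. Where you genuinely diverge is the apex. The paper expresses each $A_i$ in the dual basis of $\{B_1,C\}$, reads off the signs of the coefficients from $C\in\cone(A_i,B_1)$, and picks an extremal index $i_0$ (and symmetrically $j_0$) so that all six generators lie in the single pointed cone $\cone(A_{i_0},B_{j_0})$; the separating functional is then the sum of the duals of $A_{i_0},B_{j_0}$. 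You instead pass to polar angles measured from $C$, use the off-diagonal instances of $(\star)$ to force all $A_i$ onto one side and all $B_j$ onto the other, and use the diagonal instances $C\in\cone(A_i,B_i)$ (correctly flagged as the essential extra input, since the two maxima may occur at the same index) to bound $\theta_A+\theta_B<\pi$, producing the half-plane directly. The two arguments encode the same planar geometry, but yours constructs the positive functional without singling out a distinguished pair, while the paper's yields the slightly sharper conclusion that one pair $(A_{i_0},B_{j_0})$ already spans $\cone(A_1,A_2,A_3,B_1,B_2,B_3)$ --- a fact it does not need beyond pointedness. Your parenthetical about the antipodal diagonal case is phrased loosely (the degenerate cone is the full line through $A_i$, and $C$ cannot lie on it because that would force $C\in\cone(A_i)$, $C\in\cone(B_i)$, or $C=0$), but the conclusion you draw from it is right.
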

	\begin{proof}
		Conditions {\bf (N)} and {\bf (R)} are obvious.
		We can easily check that $A_j, B_j \neq 0$ for all $j$.

		 Suppose that $C \in \cone (A_1,A_2,A_3)$. By Carath\'eodory's theorem,  there exists $i, j \in \{1, 2, 3\}$ such that $C \in \cone (A_i,A_j)$. This contradicts the assumption. Therefore, $C \notin \cone (A_1,A_2,A_3)$. By the same argument, $C \notin \cone (B_1,B_2,B_3)$. 
		
		To show the existence of the apex, we show that there exists $i_0, j_0$ such that 
		\begin{equation*}
			\cone (A_{i_0}, B_{j_0}) = \cone (A_1,A_2,A_3, B_1,B_2,B_3).
		\end{equation*}  
		Since $C \notin \cone (B_1)$, $B_1$ and $C$ are linearly independent. Let $\beta_1,\gamma$ be the dual basis of $B_1,C$. Then,  $A_i = \beta_1(A_i)B_1 + \gamma(A_i)C$.  Since $C \in \cone (A_i, B_1)$,  we have $\beta_1(A_i)<0$ and $\gamma(A_i)>0$ for all $i$.  Take $i_0 \in \{1,2,3\}$  such that $\beta_1(A_{i_0})^{-1}\gamma(A_{i_0}) \geq \beta_1(A_{i})^{-1}\gamma(A_{i})$ for all $i$. Then,  since $B_1 = \beta_1(A_{i_0})^{-1}(A_{i_0}-\gamma(A_{i_0})C)$, we  have 
		\begin{equation*}
			\begin{split}
				A_i &= \beta_1(A_i)B_1 + \gamma(A_i)C\\
				&=\beta_1(A_i)(\beta_1(A_{i_0})^{-1}(A_{i_0}-\gamma(A_{i_0})C) + \gamma(A_i)C\\
				&= \beta_1(A_i)\beta_1(A_{i_0})^{-1} A_{i_0} + (\gamma(A_i) - \beta_1(A_i)\beta_1(A_{i_0})^{-1}\gamma(A_{i_0}))C.
			\end{split}
		\end{equation*}
		Since the coefficients are nonnegative,  $A_i \in \cone (A_{i_0}, C)$. There exists $j_0$ such that $B_j \in \cone (B_{j_0}, C)$ for all $j$ using the same argument as above. Since $C \in \cone (A_{i_0}, B_{j_0})$,  we have $A_1,A_2,A_3, B_1,B_2,B_3 \in \cone (A_{i_0}, B_{i_0})$. Let $\alpha$ be the sum of the dual of $A_{i_0}, B_{j_0}$. Then, $\alpha(A_j), \alpha(B_j)>0$ for all $j$. This shows that $\cone (A_1,A_2,A_3, B_1,B_2,B_3)$ has the apex $0$. Thus, {\bf (C)} is fulfilled, and this completes the proof. 
	\end{proof}

	\begin{thm}\label{thm:SU3}
		If the condition {\bf ($\star$)} is fulfilled, then $\Phi^{-1}(C)$ is $T\times T$-equivariantly diffeomorphic to $SU(3)$. 
	\end{thm}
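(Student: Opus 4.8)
The plan is to relate the level set $\Phi^{-1}(C)$ to the level set $(f_1,f_2)^{-1}(1,1)=SU(3)$ from Remark \ref{rema:SU(3)} by a fiberwise rescaling. The key geometric observation is that $\Phi$ and $(f_1,f_2)$ are both built from the same squared-modulus coordinate functions $|z_1|^2,\dots,|w_3|^2$ on $M$, and that the flows of the associated Hamiltonian vector fields are the diagonal circle actions described in Remark \ref{rema:flow}. Under condition $(\star)$, Lemma \ref{lemm:star} already supplies that $(\mathbf N)$, $(\mathbf R)$, $(\mathbf C)$ hold, so by Propositions \ref{prop:nonempty}, \ref{prop:regular}, and \ref{prop:compact} the level set $\Phi^{-1}(C)$ is nonempty, a regular level set (hence a smooth submanifold), and compact. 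This is the ``input'' I will feed into an explicit flow argument.

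First I would construct an explicit $T\times T$-equivariant diffeomorphism. Since the apex argument inside the proof of Lemma \ref{lemm:star} produced a linear functional $\alpha$ with $\alpha(A_j),\alpha(B_j)>0$ for all $j$, I can choose a two-dimensional family of ``radial'' directions: the idea is to move each point of $\Phi^{-1}(C)$ along the orbits of the noncompact counterpart of the $T\times T$-action, i.e. along the real one-parameter groups that rescale the moduli $|z_j|,|w_j|$ while preserving the constraint $\sum_j z_jw_j=0$ (scaling $z\mapsto \lambda z$, $w\mapsto \lambda^{-1}w$ keeps $M$ invariant, and independent positive rescalings in compatible slots keep the quadric constraint). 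Concretely, I would exhibit a smooth map that, for each $(z,w)\in\Phi^{-1}(C)$, applies a diagonal real positive scaling determined by the values of $f_1,f_2$ at that point so as to land exactly on $(f_1,f_2)^{-1}(1,1)$, and check this is a bijection with smooth inverse. The compatibility condition $A_j+B_j$ constant (equivalently $a_j+b_j$ constant in Proposition \ref{prop:Hamiltonian}) is what guarantees the relevant scalings preserve the complex quadric $M$.

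The cleanest route, and the one I expect to carry out, is to argue abstractly that both level sets are orbits of the same enlarged group. The squared moduli together cut out, on $M$, the orbits of the real positive part of the complexified diagonal torus $(\C^*)^? $ acting on $\C^6$ intersected with $M$; both $\Phi^{-1}(C)$ and $SU(3)$ are sections transverse to these orbit directions. Following the standard Kempf--Ness / LVM-style argument (as in \cite{MV2004}), I would show that each orbit of this real positive abelian group meets each of the two level sets in exactly one point, and that the resulting map $\Phi^{-1}(C)\to SU(3)$ is smooth, equivariant for $T\times T$ (since the compact torus action commutes with the positive rescalings), and a diffeomorphism. Equivariance is automatic because the rescaling flow commutes with the $(S^1)^6$-action by construction.

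The hard part will be the transversality/uniqueness step: verifying that along each orbit of the positive rescaling group the function $\Phi$ (respectively $(f_1,f_2)$) is strictly monotone, so that each orbit hits the level set $\{C\}$ (respectively $\{(1,1)\}$) exactly once. This monotonicity is exactly where conditions $(\mathbf C)$ and $(\star)$ enter: the apex functional $\alpha$ from Lemma \ref{lemm:star} gives a properness/positivity estimate forcing the radial function to sweep monotonically from $0$ to $\infty$, pinning down a unique intersection, while the cone conditions prevent degenerate orbits lying entirely inside the excluded coordinate subspaces. I would isolate this as a lemma about strict convexity of $t\mapsto \Phi$ along the positive-scaling directions, deduce existence and uniqueness of the crossing point, and then assemble the global diffeomorphism.
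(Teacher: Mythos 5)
Your strategy is genuinely different from the paper's, which constructs no explicit flow or diffeomorphism at all: it joins the data $(A_j,B_j)$ to the degenerate data $A_j^0=aA_1$, $B_j^0=bB_1$ (where $C=aA_1+bB_1$) by the linear homotopy $A_j^t=tA_j+(1-t)A^0$, $B_j^t=tB_j+(1-t)B^0$, checks that $(\star)$ is preserved for every $t$, and applies an equivariant Ehresmann lemma (Lemma \ref{lemm:equivariantEhresmann}) to the proper equivariant submersion $\Psi^{-1}(C)\to[0,1]$; the fibre at $t=0$ is $SU(3)$ by Remark \ref{rema:SU(3)} because $A^0,B^0$ form a basis of $\R^2$. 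This sidesteps the convexity analysis you propose entirely.

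As written, your proposal has a genuine gap precisely at the step you yourself flag as ``the hard part'': the assertion that each orbit of a positive rescaling group meets \emph{each} of the two level sets in exactly one point is the whole content of the theorem, and it is neither proved nor even precisely formulated. Concretely: (i) you never fix the group --- the natural transverse directions to $\Phi^{-1}(C)$ are generated by $JX_1,JX_2$ with weights $(A_j,B_j)$, while those to $(f_1,f_2)^{-1}(1,1)$ have weights $((1,0),(0,1))$ on $(z_j,w_j)$; these are different two-parameter subgroups, and for the $(A_j,B_j)$-weighted group the condition $\sum|z_j|^2e^{2\langle A_j,u\rangle}=\sum|w_j|^2e^{2\langle B_j,u\rangle}=1$ (with $\lambda=e^u$) asks two convex curves in the $u$-plane to meet in a single point, which convexity alone does not guarantee; (ii) ``strict monotonicity of $t\mapsto\Phi$ along the orbits'' does not parse, since $\Phi$ is $\R^2$-valued and the orbits are two-dimensional --- what is needed is that a map $(\R_{>0})^2\to\R^2$ is a bijection onto a region containing $C$, and the role of $(\star)$ in placing $C$ in that image must be proved, not invoked. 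A repairable version of your idea does exist: use the scaling $(z,w)\mapsto(sz,tw)$, which preserves $M$ and whose orbits obviously meet $SU(3)$ once; then $\Phi(sz,tw)=s^2P+t^2Q$ with $P=\sum_j A_j|z_j|^2$, $Q=\sum_j B_j|w_j|^2$, and one must deduce from $(\star)$ that $P,Q$ are linearly independent and that $C$ lies in the interior of $\cone(P,Q)$ for every $(z,w)\in M$ (a rerun of the cone manipulations in Lemma \ref{lemm:star}). That verification is exactly where all the work lies, and it is absent from your proposal.
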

	\begin{proof}
		Since  $C \in \cone (A_1, B_1)$, there exists $a, b \in \R_{\geq 0}$ such that $C = aA_1 + bB_1$. Since $C \notin \cone (A_1), \cone (B_1)$, we have $a, b \neq 0$. We put $A^0 = aA_1$, $B^0 = bB_1$. Then, $C = A^0 + B^0$. For $t \in [0,1]$, we define $A_i^t = tA_i + (1-t)A^0$ for $i=1,2,3$, $B_j ^t= tB_j + (1-t)B^0$ for $j=1,2,3$. Then, $A_1^t + B_1^t = A_2^t + B_2^t = A_3^t + B_3^t$. We show $(A_1^t, A_2^t, A_3^t, B_1^t, B_2^t, B_3^t, C)$ satisfies the condition {\bf ($\star$)} first. Suppose that $C \in \cone (A_i^t, A_j^t)$ for some $i, j \in \{1,2,3\}$. Then, $C \in \cone (A_1, A_2, A_3)$. By Carath\'eodory's theorem, there  exists $i', j' \in \{1,2,3\}$ such that $C \in \cone (A_{i'}, A_{j'})$. This contradicts to $(\star)$. By the same argument, $C \notin \cone (B_i^t, B_j^t)$. 
		
		Since $C \in \cone (A_i, B_j)$ for $i, j \in \{1,2,3\}$, there exists $r_{ij}^A, r_{ij}^B \in \R{\geq 0}$ such that $C = r_{ij}^AA_i + r_{ij}^BB_j$. Since $C \notin \cone(A_i), \cone(B_j)$, we have that $r_{ij}^A, r_{ij}^B \neq 0$. 
		We have 
		\begin{equation*}
			\begin{split}
				C &= sC + (1-s)C\\
				&= s(r_{ij}^AA_i + r_{ij}^BB_j) + (1-s)(r_{1j}^AA_1 + r_{1j}^BB_j)\\
				&= sr_{ij}^AA_i + (1-s)r_{1j}^AA_1 + (sr_{ij}^B+(1-s)r_{1j}^B)B_j
			\end{split}
		\end{equation*}
		for all $s \in [0,1]$. Choose $s$ so that $sr_{ij}^AA_i + (1-s)r_{1j}^AA_1 \in \cone (A_i^t)$. There  exist $r_{ij}^A{}', r_{ij}^B{}'>0$ such that $C = r_{ij}^A{}'A_i^t + r_{ij}^B{}'B_j$. By the same argument as above, there exist $r_{ij}^A{}'', r_{ij}^B{}''>0$ such that $C = r_{ij}^A{}''A_i^t + r_{ij}^B{}''B_j^t$. Therefore, $(\star)$ is fulfilled. 
	
		Define the map  $\Psi \co M \times [0,1] \to \R^2$  by \begin{equation*}\Psi(z,w,t) = \sum_{j=1}^3 (A_j^t|z_j|^2 + B_j^t|w_j|^2)\end{equation*}
		 for $(z,w,t) \in M \times [0,1]$. 
		By proposition \ref{prop:regular} and Lemma \ref{lemm:star}, $\Psi^{-1}(C)$ is nonempty compact set, and $C$ is a regular value of $\Psi$. Thus $\Psi ^{-1}(C)$ is a manifold with  boundary. Let $\pi \co \Psi^{-1}(C) \to [0,1]$ be the second projection. Since $(A_1^0, A_2^0, A_3^0, B_1^0, B_2^0, B_3^0, C)$ satisfies {\bf ($\star$)}, it follows from Proposition \ref{prop:regular} and Lemma \ref{lemm:star} that  $A_1^0$ and $B_1^0$ form a basis of $\R^2$. By applying the inverse matrix, it follows from Proposition \ref{prop:SU(3)} that $\pi^{-1}(0)$ is equivariantly diffeomorphic to $SU(3)$. By definition, $\pi$ is an equivariant proper surjective submersion. We apply the following lemma. 
		
		\begin{lemm}\label{lemm:equivariantEhresmann}
			Let $N$ be a compact manifold with boundary, $G$ a compact Lie group acting on $N$, and $\pi \co N \to [0,1]$ a $G$-invariant surjective submersion.  Then the pre-image $\pi^{-1}(1)$ is $G$-equivariantly diffeomorphic to $\pi^{-1}(0)$. 
		\end{lemm}
		\begin{proof}[Proof of Lemma \ref{lemm:equivariantEhresmann}]
			Since $\pi \co N \to [0,1]$ is a submersion, by constant rank theorem, there exists an atlas $\{(U_\alpha, x_{\alpha,1},\dots, x_{\alpha,n})\}$ on $N$ such that $\pi$ is of the form $(x_{\alpha,1},\dots, x_{\alpha,n}) \mapsto x_{\alpha,n}$. Let $\{\rho_\alpha\}$ be a partition of unity subordinate to the open cover $\{U_\alpha\}$. Let $V$ be a vector field on $N$ defined by $V=\sum_{\alpha}\rho_\alpha \partial /\partial x_{\alpha,n}$. Then, $\pi_{*}V = \partial/\partial t$, where $t$ is the standard coordinate of $[0,1]$. 
			We define a vector field $V'$ on $N$ by 
			\begin{equation*}
				V'_p = \int_{g \in G} (g_{*,p})^{-1}(V_{gp}) d\mu
			\end{equation*}
			for $p \in N$, where $\mu$ denotes the normalized Haar measure on $G$. Then $V'$ is a $G$-invariant smooth vector field on $N$. Indeed, for $p \in N$ and $h\in G$,  
			\begin{equation*}
				\begin{split}
					h_{*,p}(V'_p) &= h_{*,p}\int_{g \in G} (g_{*,p})^{-1}(V_{gp}) d\mu\\
					&=\int_{g \in G} h_{*,p}\circ (g_{*,p})^{-1}(V_{gp}) d\mu\\
					&= \int_{g \in G} h_{*,p}\circ (g_{*,gp}^{-1})(V_{gp})d\mu\\
					&= \int_{g \in G} (hg^{-1})_{*,gp}(V_{gp})d\mu\\
					&= \int_{g \in G} (gh^{-1})_{*,hp}^{-1}(V_{gp})d\mu\\
					&= \int_{g \in G} (gh^{-1})_{*,hp}^{-1}(V_{(gh^{-1})hp})d\mu\\
					&= V'_{hp}. 
				\end{split}
			\end{equation*}
			Moreover, since $\pi$ is $G$-invariant and $\pi_{*}V = \partial/\partial t$, we have $\pi_*V' = \partial/\partial t$. Indeed, for $p \in N$, 
			\begin{equation}\label{eq:piV'}
				\begin{split}
					\pi_{*,p}(V_p') &= \pi_{*,p}\int_{g \in G} (g_{*,p})^{-1}(V_{gp}) d\mu\\
					&= \int_{g \in G} \pi_{*,p}\circ (g_{*,p})^{-1}(V_{gp}) d\mu\\
					&= \int_{g \in G} \pi_{*,gp}(V_{gp}) d\mu\\
					&= \int_{g \in G} (\partial/\partial t)_{\pi(gp)} d\mu\\
					&= \int_{g \in G} (\partial/\partial t)_{\pi(p)} d\mu\\
					&= (\partial/\partial t)_{\pi(p)}.
				\end{split}
			\end{equation}
			The third and fifth equalities in \eqref{eq:piV'} follow from $G$-invariance of $\pi$. The last equality in \eqref{eq:piV'} follows from that $\mu$ is the normalized Haar measure on $G$. 
			Since $N$ is compact, we have a flow $F(x,t)$ of $V'$, which is defined whenever $\pi(x)+t \in [0,1]$ for $x \in N$ and $t \in [0,1]$. Since $V'$ is $G$-invariant, so is $F(x,t)$. Therefore, the map given by $x \mapsto F(x,1)$ for $x \in \pi^{-1}(0)$ is a $G$-invariant diffeomorphism onto $\pi^{-1}(1)$. The lemma is proved.
		\end{proof}
		
		Applying Lemma \ref{lemm:equivariantEhresmann}, 
		 $\pi^{-1}(0)= SU(3)$ and $\pi^{-1}(1) = \Phi^{-1}(C)$ are equivariantly diffeomorphic, thus proving the theorem.
	\end{proof}
	Now we can prove Theorem \ref{thm:mainthm}.
\begin{proof}[Proof of Theorem \ref{thm:mainthm}]
	Part (1) follows from Theorems \ref{thm:tK}, \ref{thm:SU3} and Remark \ref{rema:equivariant}. Part (2) follows from Corollary \ref{cor:reduction}, Theorem \ref{thm:SU3} and Remark \ref{rema:equivariant}. 
\end{proof}

\begin{rema}\label{rem-cstr2}
As Remark \ref{rem-cstr}, for each $(\rho_L, \rho_R)$ under the condition $(\star)$, we obtain the family of complex structures on $SU(3)$ parametrized by $GL(2,\R)/GL(1,\C)$.
The  K\"ahler orbifold structure on $SU(3)/(\rho_L, \rho_R)((S^1)^2)$ is uniquely determined by $(\rho_L, \rho_R)$.
\end{rema}

\begin{rema}
	Theorem \ref{thm:mainthm} (1) can be generalized to double-sided $\R^2$-actions on $SU(3)$. Let 
	\begin{equation*}
		 w_1^L, w_2^L, w_3^L, w_1^R, w_2^R, w_3^R \in \R^2
	\end{equation*}
	 be such that $\sum_{j=1}^3 w_j^L = \sum_{j=1}^3w_j^R=0$, and $(\star)$ is fulfilled. 
	  We consider the action of $\R^2$ on $SU(3)$ defined by 
	\begin{equation*}
		v\cdot A := \diag(e^{\sqrt{-1}\langle v, w_1^L\rangle}, e^{\sqrt{-1}\langle v, w_2^L\rangle}, e^{\sqrt{-1}\langle v, w_3^L\rangle})A  \diag(e^{\sqrt{-1}\langle v, w_1^R\rangle}, e^{\sqrt{-1}\langle v, w_2^R\rangle}, e^{\sqrt{-1}\langle v, w_3^R\rangle})^{-1}
	\end{equation*}
	for $v \in \R^2$ and $A \in SU(3)$, where $\langle -, -\rangle$ denotes the standard inner product on $\R^2$.  Then, there exists a $T \times T$-invariant complex structure on $SU(3)$ such that the foliation whose leaves are $\R^2$-orbits is holomorphic and transverse K\"ahler. All leaves are closed if and only if $A_j, B_j$, $j=1,2,3$ generate a lattice of $\R^2$. 

Consider $A^{t}_j , B_j^t\in \R^{2}$  as in the Proof of Theorem \ref{thm:SU3}.
Corresponding to 
 \[w_1^L(t), w_2^L(t), w_3^L(t), w_1^R(t), w_2^R(t), w_3^R(t) \in \R^2\]
  so that 
\begin{equation*}
		A^{t}_j := w_j^L(t) - w_1^R(t), \quad B_j^t := -w_j^L(t) +w_3^R(t), \quad C := -w_1^R(t) +w_3^R(t),
	\end{equation*}
by the above arguments,  we also obtain  a smooth  family $\{J_{t}\}$ of $T \times T$-invariant complex structures on $SU(3)$ parametrized by $t\in [0,1]$.
$J_{0}$ is a left-invariant complex structure on $SU(3)$ (see Remark \ref{remafree}).
In \cite{IK2020}, we compute  the Kuranishi spaces  of left-invariant complex structures on even-dimensional simply connected compact Lie groups.
This allows us to describe all small deformations of  left-invariant complex structures.
However, we use the Kodaira-Spencer theory in \cite{IK2020}. Therefore,  it is difficult to construct large deformations such as $\{J_t\}$, in this manner. 
\end{rema}

\section{Freeness of the double-sided action}\label{sec:freeness}
In this section, we consider the case when $(\rho_L, \rho_R)((S^1)^2)$-action is free under the condition $(\star)$. 
\begin{lemm}\label{lemm:free}
		Let $\rho_L, \rho_R \co (S^1)^2 \to T$ be smooth homomorphisms and $A_j$, $B_j$ as Theorem \ref{thm:mainthm}.  The $(S^1)^2$-action on $SU(3)$ given by $\rho_L, \rho_R$ is free if and only if $A_i, B_j$ form a $\Z$-basis of $\Z^2$ for all pair $(i,j)$ with $i \neq j$. 
	\end{lemm}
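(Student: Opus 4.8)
\textbf{The plan is to} translate freeness of the $(S^1)^2$-action into a statement about isotropy subgroups of points of $SU(3) \cong \Phi^{-1}(C)$, and then locate the points with the largest isotropy. The action is $t \cdot A = \rho_L(t) A \rho_R(t)^{-1}$, so a point $A$ is fixed by $t = (t_1,t_2)$ precisely when $\rho_L(t) A = A \rho_R(t)$. Rather than work with matrices directly, I would use the embedding into $M \subset \C^6$ from Section \ref{sec:levelset}: under the identification $A \mapsto (z,w)$ of \eqref{eq:M}, the $T \times T$-action becomes the coordinatewise action of $(S^1)^6$ with the weights computed in \eqref{eq:weights}, and the weights of the restricted $(S^1)^2$-action on $z_j$ and $w_j$ are exactly $A_j = w_j^L - w_1^R$ and $B_j = -w_j^L + w_3^R$ respectively. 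Thus $t \in (S^1)^2$ fixes $(z,w)$ if and only if $t^{A_j} = 1$ for every $j$ with $z_j \neq 0$ and $t^{B_j} = 1$ for every $j$ with $w_j \neq 0$. Freeness of the action means this forces $t = 1$ at every point of $\Phi^{-1}(C)$.

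\textbf{Next I would} determine exactly which index patterns of vanishing coordinates occur on $\Phi^{-1}(C)$. A point $(z,w) \in M$ has $z \neq 0$, $w \neq 0$, and $\sum_j z_j w_j = 0$. Applying $\Phi$ gives $C = \sum_{z_j \neq 0} |z_j|^2 A_j + \sum_{w_j \neq 0} |w_j|^2 B_j$, so the supports $I = \{ j : z_j \neq 0\}$ and $J = \{ j : w_j \neq 0\}$ must satisfy $C \in \cone(\{A_i\}_{i \in I} \cup \{B_j\}_{j \in J})$ with strictly positive coefficients on the generators actually used. The key point extracted from ($\star$) is that $C \notin \cone(A_i, A_{i'})$ and $C \notin \cone(B_j, B_{j'})$ for all indices, so neither $I$ alone nor $J$ alone can express $C$; hence on $\Phi^{-1}(C)$ both $z$ and $w$ must be genuinely nonzero in a way that mixes an $A$ with a $B$. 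The isotropy at $(z,w)$ is the common kernel $\{ t : t^{A_i} = 1 \ \forall i \in I, \ t^{B_j} = 1 \ \forall j \in J\}$, and the worst (largest) isotropy occurs at points supported on the smallest admissible pair of indices. Since $C \in \cone(A_i, B_j)$ for every pair $(i,j)$ and $C \notin \cone(A_i), \cone(B_j)$ individually, there genuinely exist points of $\Phi^{-1}(C)$ with $z$ supported on a single index $i$ and $w$ supported on a single index $j$, and these can be taken with $i \neq j$ (this is required by $\sum z_k w_k = 0$ when the support is this small). At such a point the isotropy is $\{ t : t^{A_i} = t^{B_j} = 1\}$, which is trivial exactly when $A_i, B_j$ form a $\Z$-basis of $\Z^2$.

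\textbf{To finish,} I would argue the two directions. For necessity: for each pair $(i,j)$ with $i \neq j$, condition ($\star$) supplies a point $(z,w) \in \Phi^{-1}(C)$ supported precisely on $z_i, w_j$ (take $C = r A_i + s B_j$ with $r,s > 0$ and set $|z_i|^2 = r$, $|w_j|^2 = s$, all other coordinates zero, which lies in $M$ since $z_i w_j$ does not enter $\sum_k z_k w_k$ when $i \neq j$). Freeness at this point forces the subgroup $\{ t : t^{A_i} = t^{B_j} = 1\}$ to be trivial, equivalently $A_i, B_j$ to be a $\Z$-basis of $\Z^2$; ranging over all such pairs gives the stated condition. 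For sufficiency: suppose $A_i, B_j$ is a $\Z$-basis whenever $i \neq j$, and let $(z,w) \in \Phi^{-1}(C)$ be arbitrary with supports $I, J$. The constraints $z \neq 0$, $w \neq 0$ and $\sum_k z_k w_k = 0$ guarantee there exist $i \in I$ and $j \in J$ with $i \neq j$: indeed if $I$ and $J$ were both contained in a single common index the product term would be nonzero, contradicting the quadric equation, so the supports cannot be confined to a single shared index, and one extracts indices $i \in I$, $j \in J$ with $i \neq j$. Any $t$ fixing $(z,w)$ satisfies $t^{A_i} = t^{B_j} = 1$, and since these weights form a $\Z$-basis, $t = 1$. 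Hence the action is free. \textbf{The main obstacle} I anticipate is the careful support-combinatorics in the sufficiency direction — verifying from $\sum_k z_k w_k = 0$ together with $z, w \neq 0$ that one can always find $i \in I$, $j \in J$ with $i \neq j$, and confirming that the single-index support configurations realizing the extremal isotropy actually occur on the level set; both hinge on reading ($\star$) correctly as the statement that $C$ needs one $A$ and one $B$, never two of the same type.
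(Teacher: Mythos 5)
Your proposal matches the paper's proof in all essentials: both reduce freeness to the isotropy subgroup $\bigcap_{i\in I}\ker(t\mapsto t^{A_i})\cap\bigcap_{j\in J}\ker(t\mapsto t^{B_j})$ determined by the supports $I,J$ of $(z,w)$, prove necessity by testing at points supported on a single $z_i$ and a single $w_j$ with $i\neq j$, and prove sufficiency by extracting such a pair $(i,j)$ with $i \neq j$ from $z\neq 0$, $w\neq 0$, $\sum_k z_kw_k=0$. The only difference is that the paper works on the level set $X_0=\{\sum_j|z_j|^2=\sum_j|w_j|^2=1,\ \sum_j z_jw_j=0\}$, where the single-index test points exist unconditionally, whereas your use of $\Phi^{-1}(C)$ makes the necessity direction lean on $(\star)$ even though the lemma as stated does not assume it.
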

	\begin{proof}
		By Proposition \ref{prop:SU(3)}, $SU(3)$ is equivariantly diffeomorphic to \begin{equation*}X_0 := \left\{(z, w) \in \C^3 \times \C^3 \mid \sum_{j=1}^3|z_j|^2=1, \sum_{j=1}^3|w_j|^2=1, \sum_{j=1}^3 z_jw_j=0\right\} \subset M.\end{equation*}
		We shall see the isotropy subgroups of the $T$-action on $X_0$  given by $(\rho_L, \rho_R)$ are trivial. Let $(z, w) \in X_0$. Put $I_z := \{ i \mid z_i \neq 0\}$ and $J_w := \{j \mid w_j \neq 0\}$. 
		Since the $(S^1)^2$-action on $X_0$ given by $(\rho_L, \rho_R)$ satisfies
		\begin{equation*}
			t\cdot (z,w) = (t^{A_1}z_1, t^{A_2}z_2, t^{A_3}z_3, t^{B_1}w_1, t^{B_2}w_2, t^{B_3}w_3)
		\end{equation*}
		by \eqref{eq:weights}, 
		the isotropy subgroup  at $(z,w)$ is isomorphic to 
		\begin{equation*}
			\bigcap_{i \in I_z} \ker (t \mapsto t^{A_i}) \cap \bigcap_{j \in J_w} \ker (t \mapsto t^{B_i}). 
		\end{equation*}
		
		For the ``if" part, assume that $A_i, B_j$ form a $\Z$-basis of $\Z^2$ for all pairs $(i,j)$ with $i \neq j$. Since $z \neq 0$ and $w \neq 0$, we have  $I_z \times J_w \neq \emptyset$. Since $\sum_{j=1}^3 z_jw_j =0$, there exists a pair $(i,j) \in I_z \times J_w$ with $i\neq j$. Since $A_i$ and $B_j$ form a basis of $\Z^2$, the homomorphism $(S^1)^2 \to (S^1)^2$ given by $t \to (t^{A_i}, t^{B_j})$ is an isomorphism. In particular, $\ker (t \mapsto t^{A_i}) \cap \ker (t \mapsto t^{B_i})$ is trivial. 
		Thus, the isotropy subgroup at $(z,w)$ is trivial. Since $(z, w) \in X_0$ is arbitrary,  the $(S^1)^2$-action on $X_0$ given by $(\rho_L, \rho_R)$ is free. 
		
		For the ``only if" part, assume that the $(S^1)^2$-action  on $X_0$ given by $(\rho_L, \rho_R)$ is free. Let $i_0, j_0 \in \{1,2,3\}$ be such that $i_0 \neq j_0$. Put 
		\begin{equation*}
			z_i =\begin{cases} 
				0, & i \neq i_0,\\
				1, & i =i_0,
			\end{cases}
			\quad 
			w_j = \begin{cases}
				0, & j \neq j_0,\\
				1, & j =j_0.
			\end{cases}
		\end{equation*}
		Then $(z,w) := (z_1, z_2, z_3, w_1, w_2, w_3) \in X_0$. Since the isotropy subgroup at $(z,w)$ is trivial, $A_i$ and $B_j$ span $\Z^2$. Therefore, $A_i, B_j$ form a $\Z$-basis of $\Z^2$. 
		
		The lemma is proved. 
	\end{proof}
	\begin{lemm}\label{lemm:ABbasis}
		If $A_j, B_j \in \Z^2$ for $j=1,2,3$ and $C\in\Z^2$ satisfy $A_j + B_j =C$ for all $j$, $(\star)$ and $A_i$, $B_j$ form a $\Z$-basis of $\Z^2$, then $A_1 = A_2 = A_3$ and $B_1 = B_2 = B_3$. 
	\end{lemm}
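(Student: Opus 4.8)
The plan is to reduce the whole statement to proving $A_1=A_2=A_3$: once that is known, $B_j=C-A_j$ gives $B_1=B_2=B_3$ for free. Throughout I write $\det(u,v)$ for the determinant of the $2\times 2$ matrix with columns $u,v\in\R^2$, so that the hypothesis ``$A_i,B_j$ form a $\Z$-basis of $\Z^2$'' (for $i\neq j$, as in Lemma \ref{lemm:free}) reads $\det(A_i,B_j)=\pm 1$, and I record the bilinearity identity $\det(A_i,B_j)=\det(A_i,C-A_j)=\det(A_i,C)-\det(A_i,A_j)$ for later use. First I would extract strict positivity from $(\star)$: since $C\notin\cone(A_i,A_j)\supseteq\cone(A_i)$ and $C\notin\cone(B_i,B_j)\supseteq\cone(B_j)$ while $C\in\cone(A_i,B_j)$, the (unique, for $i\neq j$) expression $C=s_{ij}A_i+t_{ij}B_j$ with $s_{ij},t_{ij}\geq 0$ in fact has $s_{ij},t_{ij}>0$.

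The key step, which I expect to be the main obstacle, is to show that $\det(A_i,B_j)$ has one and the same sign $\epsilon$ for all pairs with $i\neq j$. Feeding $C=s_{ij}A_i+t_{ij}B_j$ into the bilinear form yields $\det(C,A_i)=-t_{ij}\det(A_i,B_j)$ and $\det(C,B_j)=s_{ij}\det(A_i,B_j)$. As $s_{ij},t_{ij}>0$ and $\det(A_i,B_j)=\pm1\neq 0$, the quantities $\det(C,A_i)$ and $\det(C,B_j)$ are nonzero, and $\operatorname{sign}\det(A_i,B_j)=-\operatorname{sign}\det(C,A_i)$ depends only on $i$, while $\operatorname{sign}\det(A_i,B_j)=\operatorname{sign}\det(C,B_j)$ depends only on $j$. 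Calling these $\epsilon_i$ and $\delta_j$, the relation $\epsilon_i=\delta_j$ for every $i\neq j$ (run over the three indices) forces all $\epsilon_i$ and $\delta_j$ to coincide with a single sign $\epsilon\in\{\pm1\}$. The $\Z$-basis hypothesis then upgrades this to the clean statement $\det(A_i,B_j)=\epsilon$ for all $i\neq j$.

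The remainder is a short linear-algebra computation. Writing $c_i:=\det(A_i,C)$ and $d_{ij}:=\det(A_i,A_j)$, the identity above gives $c_i-d_{ij}=\epsilon$ for all $i\neq j$. For fixed $i$ this shows $d_{ij}$ is independent of $j$, so $d_{i1}=d_{i2}=d_{i3}=c_i-\epsilon$ (the diagonal entries being $0$). Combining with the antisymmetry $d_{ij}=-d_{ji}$ over the three pairs gives $c_1+c_2=c_1+c_3=c_2+c_3=2\epsilon$, whence $c_1=c_2=c_3=\epsilon$ and therefore $d_{ij}=0$ for all $i,j$. The vanishing of all $\det(A_i,A_j)$ means $A_1,A_2,A_3$ are pairwise parallel, lying on a common line $\R u$ through the origin, say $A_i=\lambda_i u$ (with $\lambda_i\neq 0$ since $A_j\neq 0$ by Lemma \ref{lemm:star}). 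Then $\det(A_i,C)=\lambda_i\det(u,C)=\epsilon$ with $\det(u,C)\neq 0$ forces all $\lambda_i$ to be equal, i.e. $A_1=A_2=A_3$; and $B_j=C-A_j$ finishes the proof that $B_1=B_2=B_3$.

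It is worth noting that this argument uses only the cone membership in $(\star)$ together with the basis hypothesis; the apex condition from Lemma \ref{lemm:star} is not needed, since the strict positivity of $s_{ij},t_{ij}$ already suffices to pin down the sign $\epsilon$.
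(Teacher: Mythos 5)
Your proof is correct, but it follows a genuinely different route from the paper's. The paper's argument is a three-line coordinate computation: since $A_1,B_2$ is a $\Z$-basis and $C\in\cone(A_1,B_2)\setminus(\cone(A_1)\cup\cone(B_2))$, one writes $C=aA_1+bB_2$ with $a,b\in\Z_{>0}$, deduces $A_2=aA_1+(b-1)B_2$ and $B_1=(a-1)A_1+bB_2$ from $A_j+B_j=C$, and then the unimodularity of the transition matrix to the basis $(A_2,B_1)$ gives $a+b-1=\pm1$, forcing $a=b=1$; integrality of the coordinates (so $a,b\geq 1$) is what does the work. Your argument instead never passes to coordinates: you use the strict positivity of the cone coefficients to propagate a common sign $\epsilon$ through all $\det(A_i,B_j)$, upgrade this to $\det(A_i,B_j)=\epsilon$ via unimodularity, and then the identity $\det(A_i,C)-\det(A_i,A_j)=\epsilon$ together with antisymmetry of $\det(A_i,A_j)$ forces all $\det(A_i,A_j)=0$ and all $\det(A_i,C)=\epsilon$, hence $A_1=A_2=A_3$. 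Each step checks out. What your version buys is slightly more generality: it uses only that the six determinants $\det(A_i,B_j)$ ($i\neq j$) are equal up to sign, not that the vectors lie in $\Z^2$, so it would apply verbatim to real configurations with $|\det(A_i,B_j)|$ constant; the paper's version is shorter but leans on integrality. One cosmetic slip: the chain $d_{i1}=d_{i2}=d_{i3}=c_i-\epsilon$ should exclude the diagonal entry $d_{ii}=0$, since $c_i-d_{ij}=\epsilon$ is only established for $i\neq j$ at that stage; your subsequent use of antisymmetry only involves off-diagonal pairs, so nothing breaks. Your closing observation that the apex condition is not needed is also consistent with the paper, whose proof likewise uses only the cone memberships and the basis hypothesis.
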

	\begin{proof}
		Since $A_1$ and $B_2$ form a $\Z$-basis of $\Z^2$ and $C \in \cone (A_1,B_2)$, 
		there exists $a, b \in \Z_{>0}$ such that $C = aA_1+bB_2$. 
		Then, it follows from $C = A_1 + B_1 = A_2 + B_2$ that $A_2 = aA_1 + (b-1)B_2$ and $B_1 = (a-1)A_1 + bB_2$. 
		Since $A_2$ and $B_1$ form a $\Z$-basis of $\Z^2$,  we have  
		\begin{equation*}
			\det \begin{pmatrix}
				a & b-1\\
				a-1 & b 
			\end{pmatrix}
			= \pm 1.
		\end{equation*}
		It turns out that $a+b - 1 = \pm 1$. 
		This together with $a,b>0$ yields that $a = b=1$.
		Therefore, $A_2 = A_1$ and $B_1 = B_2$. By the same argument as above, $A_3 = A_1$ and $B_3 = B_2$. This completes the proof of the lemma. 
\end{proof}
\begin{prop}\label{prop:free}
	Let $\rho_L, \rho_R \co (S^1)^2 \to T$ be smooth homomorphisms and $A_j$, $B_j$, $C$ as Theorem \ref{thm:mainthm}. Assume that $A_j, B_j$, and $C$ satisfy the condition $(\star)$. Then, the action of $(S^1)^2$ on $SU(3)$ given by $(\rho_L, \rho_R)$  is free if and only if $\rho_L$ is trivial and $\rho_R$ is an isomorphism. 
\end{prop}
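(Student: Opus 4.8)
The plan is to reduce the statement to the two combinatorial lemmas already established, Lemma~\ref{lemm:free} and Lemma~\ref{lemm:ABbasis}, and then translate the resulting lattice conditions into statements about $\rho_L$ and $\rho_R$. The observation that makes this run smoothly is that, since $\rho_L$ and $\rho_R$ take values in $T$, the weights automatically satisfy $w_1^L + w_2^L + w_3^L = 0$ and $w_1^R + w_2^R + w_3^R = 0$; moreover $A_j + B_j = -w_1^R + w_3^R = C$ for every $j$, so the hypothesis $A_j + B_j = C$ of Lemma~\ref{lemm:ABbasis} is automatically met.

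For the forward direction, I would assume the $(S^1)^2$-action is free. By Lemma~\ref{lemm:free}, $A_i$ and $B_j$ form a $\Z$-basis of $\Z^2$ for every pair $(i,j)$ with $i \neq j$. Since $(\star)$ holds and $A_j + B_j = C$ for all $j$, Lemma~\ref{lemm:ABbasis} applies and yields $A_1 = A_2 = A_3$ and $B_1 = B_2 = B_3$. Unwinding $A_i = w_i^L - w_1^R$, the equalities $A_1 = A_2 = A_3$ give $w_1^L = w_2^L = w_3^L$; combined with $w_1^L + w_2^L + w_3^L = 0$ and the torsion-freeness of $\Z^2$, this forces $w_1^L = w_2^L = w_3^L = 0$, that is, $\rho_L$ is trivial. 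With $\rho_L$ trivial we have $A_j = -w_1^R$ and $B_j = w_3^R$ for all $j$, so the basis condition coming from Lemma~\ref{lemm:free} reads precisely that $w_1^R$ and $w_3^R$ form a $\Z$-basis of $\Z^2$.

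It remains to convert the condition ``$w_1^R, w_3^R$ form a $\Z$-basis'' into ``$\rho_R$ is an isomorphism'', which is the only step requiring care. I would identify the integral lattice of $T$ with $\{(n_1, n_2, n_3) \in \Z^3 \mid n_1 + n_2 + n_3 = 0\}$ and observe that the homomorphism induced by $\rho_R$ on integral lattices sends the standard generators of $\Z^2$ to the winding vectors assembled from the weights $w_j^R$; projecting to the first two coordinates (an isomorphism onto $\Z^2$ since $n_3 = -n_1 - n_2$), the induced $2 \times 2$ integer matrix has determinant $\det(w_1^R, w_2^R)$, and $\det(w_1^R, w_2^R) = \pm \det(w_1^R, w_3^R)$ because $w_2^R = -w_1^R - w_3^R$. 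Hence $\rho_R$ is an isomorphism of tori if and only if $w_1^R$ and $w_3^R$ form a $\Z$-basis of $\Z^2$, completing the forward direction. For the converse, assuming $\rho_L$ trivial and $\rho_R$ an isomorphism, the same computation gives $A_j = -w_1^R$ and $B_j = w_3^R$ with $w_1^R, w_3^R$ a $\Z$-basis, so $A_i, B_j$ form a $\Z$-basis for all pairs, in particular for $i \neq j$, and Lemma~\ref{lemm:free} yields freeness. The only genuinely delicate point is this last translation through the integral lattice of $T$; everything else is bookkeeping layered on top of the two lemmas.
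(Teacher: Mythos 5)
Your proposal is correct and follows essentially the same route as the paper: apply Lemma~\ref{lemm:free} and Lemma~\ref{lemm:ABbasis} to get $A_1=A_2=A_3$ and $B_1=B_2=B_3$, deduce $w_j^L=0$ from $\sum_j w_j^L=0$, and read off that $w_1^R,w_3^R$ form a $\Z$-basis. The only difference is that you spell out the lattice computation showing ``$w_1^R,w_3^R$ is a $\Z$-basis'' is equivalent to ``$\rho_R$ is an isomorphism'' (and derive the easy direction from Lemma~\ref{lemm:free} rather than calling it obvious), both of which the paper leaves implicit; your verification is correct.
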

\begin{proof}
	The ``only if" part is obvious. For the ``if" part, assume that the action of $(\rho_L, \rho_R)((S^1)^2)$ on $SU(3)$ is free. By Lemmas \ref{lemm:free} and \ref{lemm:ABbasis},  $A_1 = A_2 = A_3$ and $B_1=B_2=B_3$. Thus, $w_1^L = w_2^L = w_3^L$.  Since $\sum_{j=1}^3 w_j^L=0$, we have $w_j^L=0$ for all $j$. This implies  that $\rho_L$ is trivial, and $w_1^R = -A_1 = -A_2 = -A_3$ and $w_3^R = B_1= B_2 = B_3$. Since $w_1^R$ and  $w_3^R$ form a $\Z$-basis of $\Z^2$,  we have $\rho_R$ is an isomorphism. 
\end{proof}
\begin{rema}\label{remafree}
When $\rho_{L}$ is trivial, 
the set $\Phi^{-1}(C)$ is an orbit of $SU(3)$-action on $SL(3,\C)/U$.
Since the complex structure, the K\"ahler structure, and the fundamental vector fields of the $\{1\}\times T$-action on $SL(3,\C)/U$ are invariant under  $SU(3)$-action on $SL(3,\C)/U$, a complex structure and a transverse K\"ahler structure on $SU(3)$ given by Theorem \ref{thm:mainthm} are left-invariant.

	Proposition \ref{prop:free} yields that, under the condition $(\star)$, if the action of $(\rho_L, \rho_R)((S^1)^2)$ is free, then the quotient $SU(3)/(\rho_L, \rho_R)((S^1)^2)$ is nothing but a flag manifold with an invariant K\"ahler structure. 
\end{rema}
Finally, we see a nontrivial example of $(\rho_L, \rho_R)$ that satisfies $(\star)$. 
\begin{exam}
	We begin with the configuration of points $A_1, A_2, A_3, B_1, B_2, B_3, C \in \R^2$. Since $C \notin \cone (A_i, A_j)$ for all $i, j=1,2,3$, the three points $A_1,A_2$ and $A_3$ should sit in the same half plane whose boundary is the line spanned by $C$. For example, choose $A_1, A_2, A_3$, and $C$ as 
	\begin{equation*}
		A_1= A_2 = (1,0), \quad A_3 = (2,-1), \quad C = (1,1). 
	\end{equation*}
	We choose $B_1, B_2, B_3$ as 
	\begin{equation*}
		B_1 = B_2 =(0,1), \quad B_3 =(-1,2)
	\end{equation*}
	so that $A_j + B_j =C$. Then, $A_1, A_2, A_3, B_1, B_2, B_3$, and $C$ satisfy $(\star)$ and $A_1 + B_1 = A_2 + B_2 = A_3 + B_3$  (see the figure below).
	\begin{center}
		\begin{tikzpicture}
			\draw[help lines] (0,0) grid (5,5); 
				\node at (2,2) [below right] {$\mathrm O$};
				\fill (2,2) circle (.1);
				
				\node at (3,2) [below right] {$A_1 = A_2$};
				\fill (3,2) circle (.1);
				\node at (4,1) [below right] {$A_3$};
				\fill (4,1) circle (.1);
				
				\node at (3,3) [below right] {$C$};
				\fill (3,3) circle (.1);
				
				\node at (2,3) [below left] {$B_1 = B_2$};
				\fill (2,3) circle (.1);
				
				\node at (1,4) [below left] {$B_3$};
				\fill (1,4) circle (.1);
		\end{tikzpicture}
	\end{center}
	Now we solve the linear equations
	\begin{equation*}
		A_j = w_j^L - w_1^R, \quad B_j = -w_j^L +w_3^R, \quad \sum_{j=1}^3 w_j^L = \sum_{j=1}^3 w_j^R =0
	\end{equation*}
	for $w_j^L, w_j^R$, $j=1,2,3$. Then, 
	\begin{equation*}
		\begin{split}
			w_1^L &= -(A_1+A_2+A_3)/3 + A_1 = (-1/3, 1/3),\\
			w_2^L & = -(A_1+A_2+A_3)/3 + A_2 = (-1/3, 1/3), \\
			w_3^L & = -(A_1+A_2+A_3)/3 + A_3 = (2/3, -2/3), \\
			w_1^R &= -(A_1+A_2+A_3)/3 = (-4/3, 1/3),\\
			w_2^R & = (A_1+A_2+A_3)/3 - (B_1+B_2+B_3)/3 = (5/3, -5/3), \\
			w_3^R & = (B_1+B_2+B_3)/3 = (-1/3, 4/3). 
		\end{split}
	\end{equation*}
	By multiplying three so that every entry becomes integers, we obtain the homomorphisms 
	\begin{equation*}
		\begin{split}
			\rho_L (t) &=\diag (t^{3w_1^L}, t^{3w_2^L}, t^{3w_3^L}) = \diag (t_1^{-1}t_2, t_1^{-1}t_2, t_1^2t_2^{-2}), \\
			\rho_R (t) &= \diag (t^{3w_1^R}, t^{3w_2^R}, t^{3w_3^R}) = \diag (t_1^{-4}t_2, t_1^5 t_2^{-5}, t_1^{-1}t_2^4)
		\end{split}
	\end{equation*}
	such that the quotient space $SU(3)/(\rho_L, \rho_R)((S^1)^2)$ has a K\"ahler orbifold structure. 
\end{exam}

\section{Basic and Dolbeault cohomologies}\label{sec:coho}
We consider $SU(3)$ equipped with the complex structure $J$ and the transverse K\"ahler holomorphic  foliation $\mathcal F$ determined by  $(\rho_L, \rho_R) \co (S^1)^2 \to T \times T$ under the assumption $(\star)$, as in Theorem \ref{thm:mainthm}. 
Let $H^*_B(SU(3))$ denote the basic cohomology of $SU(3)$ with real coefficients associated with the foliation $\mathcal F$. In order to describe $H^*_B(SU(3))$ we apply \cite[Theorem 4.13]{IK2019}. By \cite[Theorem 4.13]{IK2019}, there exist a real $2$-dimensional vector space $W$ and a differential $d$ on  $H^*_B(SU(3)) \otimes \bigwedge W$ such that 
\begin{itemize}
	\item $dW\subset H^{2}_{B}(SU(3))$;
	\item $d$ on $H^{*}_{B}(SU(3))$ is trivial;
	\item $H^*(H^*_B(SU(3)) \otimes \bigwedge W, d)\cong H^{*}(SU(3),\R)$, 
\end{itemize}
where the degree of all non-zero element in $W$ is $1$. We denote by $A^*$ the differential graded algebra $(H^*_B(SU(3)) \otimes \bigwedge W, d)$ and by $A^k$ the degree $k$ part of $A^*$. 

$SU(3)$ is diffeomorphic to an orientable $S^3$-bundle over $S^5$. Applying the Leray-Hirsch theorem we have 
	\begin{equation*}
		H^k(SU(3),\R) \cong \begin{cases}
			\R, & k=0,3,5,8,\\
			0 & \text{otherwise}. 
		\end{cases}
	\end{equation*}
	It follows from $H^1(SU(3),\R)=0$ that any $d$-closed element in $A^1 = W\oplus H^1_B(SU(3))$ are $d$-exact. This together with $d|_{H^*_B(SU(3))} = 0$ and $d|_{A^0}=0$ yields that $H^1_B(SU(3))=0$ and $d|_W \co W \to H^2_B(SU(3))$ is injective. It follows from $H^2(SU(3),\R) = 0$ that any $d$-closed element in $A^2 = \bigwedge^2 W \oplus (H^1_B(SU(3)) \otimes W) \oplus H^2_B(SU(3))$ is $d$-exact. This together with $d|_{H^*_B(SU(3))} = 0$ yields that $H^2_B(SU(3)) =dW$. 
	Thus,  for  basis vectors $w_{1},w_{2}$ of $W$, we have $H^2_B(SU(3)) = \langle dw_1, dw_2\rangle$. 
Since  the foliation ${\mathcal F}$ is transverse K\"ahler, $H^{6}_{B}(SU(3))=\langle [\omega]^{3}\rangle$, where $\omega$ is a transverse K\"ahler form with respect to $\mathcal{F}$ (see \cite[Proposition 4.8]{IK2019}).
By the hard Lefschetz property (\cite{EKA}),  the linear map $H^2_B(SU(3)) \to H^4_B(SU(3))$ given by $\alpha \mapsto [\omega]\wedge \alpha$ is an isomorphism. Thus $H^{4}_{B}(SU(3))=\langle  [\omega]\wedge dw_{1},  [\omega]\wedge dw_{2}\rangle$. Also, the linear map $H^1_B(SU(3)) \to H^5_B(SU(3))$ given by $\alpha \mapsto [\omega]^2\wedge \alpha$ is an isomorphism. Therefore $H^5_B(SU(3))=0$.
To describe $H^3_B(SU(3))$, we focus on $H^4(SU(3),\R)=0$. It follows from $d|_{H^*_B(SU(3))} =0$, $dW \subset H^2_B(SU(3))$ and $H^5_B(SU(3))=0$ that $d(H^3_B(SU(3))\otimes W) \subset H^5_B(SU(3))=0$. On the other hand, since $A^3 = (H^2_B(SU(3))\otimes W) \oplus H^3_B(SU(3))$, we have $dA^3 \subset H^4_B(SU(3))$. It follows from $H^3_B(SU(3))\otimes W \cap H^4_B(SU(3))=0$ and $H^4(SU(3),\R)=0$ that $H^3_B(SU(3))\otimes W=0$. Since $W \neq 0$, finally we have $H^3_B(SU(3))=0$.

In summary,  
\begin{equation*}
	\dim H^k_B(SU(3)) =\begin{cases}
		1, & k=0, 6,\\
		2, & k=2,4, \\
		0 & \text{otherwise}.
	\end{cases}
\end{equation*}
\begin{rema}
The basic cohomology $H^{*}_{B}(SU(3))$ is canonically isomorphic to the singular cohomology $H^{\ast}(SU(3)/(\rho_L, \rho_R)((S^1)^2),\R)$ of the topological space $SU(3)/(\rho_L, \rho_R)((S^1)^2)$ because the action of $(S^1)^2$ is locally free (see \cite[Corollary 5.3.3]{Pfl2004}).
 \end{rema}

We consider the basic Dolbeault cohomology $H^{*,*}_{B}(SU(3))$ for the holomorphic  foliation $\mathcal F$.
 Then, we have the Hodge decomposition $H^{r}_{B}(SU(3))\otimes\C=\bigoplus_{p+q=r} H^{p,q}_{B}(SU(3))$ (\cite{EKA}).
A transverse K\"ahler form $\omega$ on $SU(3)$ with respect to $\mathcal F$ is a $(1,1)$-form. Therefore $\dim H_B^{1,1}(SU(3))$ is at least $1$. By the symmetry of basic Hodge numbers for the transverse K\"ahler foliation, $H^{2,0}_B(SU(3)) \cong H^{0,2}_B(SU(3))$. Thus the basic Hodge decomposition $H^2_B(SU(3)) \otimes \C \cong H^{2,0}_B(SU(3)) \oplus H^{1,1}_B(SU(3)) \oplus H^{0,2}_B(SU(3))$ and $\dim H^2_B(SU(3))=2$ imply that $H^{2}_{B}(SU(3))\otimes\C=H^{1,1}_{B}(SU(3))$. By the Serre duality of basic Hodge numbers, $H^{4}_{B}(SU(3))\otimes\C=H^{2,2}_{B}(SU(3))$. Thus 
\begin{equation}\label{eq:basicDolbeault}
	\dim H^{p,q}_B(SU(3)) = \begin{cases}
		1, & (p,q) = (0,0), (3,3),\\
		2, & (p,q) = (1,1), (2,2),\\
		0, & \text{otherwise}. 
	\end{cases}
\end{equation}

$H^{*,*}(SU(3))$ denotes the Dolbeault cohomology of $SU(3)$ equipped with  the complex structure determined by $(\rho_L, \rho_R) \co (S^1)^2 \to T \times T$ under the assumption $(\star)$, as in Theorem \ref{thm:mainthm}.
By \cite[Theorem 4.13]{IK2019}, 
there exist a real $2$-dimensional vector space $W$ equipped with a direct sum decomposition $W \otimes \C = W^{1,0} \oplus W^{0,1}$ with $\overline{W^{1,0}} = W^{0,1}$ and a differential $\bar\partial$ on a bi-graded algebra $H^{*,*}_B(SU(3)) \otimes \bigwedge (W^{1,0} \oplus W^{0,1})$ such that 
\begin{itemize}
	\item The degrees of $W^{1,0}$ and $W^{0,1}$ are $(1,0)$ and $(0,1)$, respectively;
	\item $\bar\partial W^{1,0} \subset H^{1,1}_B(SU(3))$ and $\bar \partial W^{0,1} \subset H^{0,2}_B(SU(3))$;
	\item $\bar \partial$ on $H^{*,*}_{B}(SU(3))$ is trivial;
	\item $H^{*,*}(H^{*,*}_B(SU(3)) \otimes \bigwedge (W^{1,0} \oplus W^{0,1}), \bar \partial )\cong H^{*,*}(SU(3))$.
\end{itemize}
We denote by $B^{*,*}$ the differential bi-graded algebra $(H^{*,*}_B(SU(3)) \otimes \bigwedge (W^{1,0} \oplus W^{0,1}), \bar \partial)$. 
	By $H^{*,*}(B^{*,*}) \cong H^{*,*}(SU(3))$ and \eqref{eq:basicDolbeault}, we can compute the Hodge numbers of $SU(3)$ except for $h^{2,1}, h^{2,2}, h^{2,3}$. 
The Hodge diamond is shown below: 
\begin{equation*}
	\begin{tikzpicture}
		\node at (0,0) {$1$};
		\node at (-.5,-.5) {$0$}; 
		\node at (.5,-.5) {$1$};
		\node at (-1,-1) {$0$};
		\node at (0,-1) {$1$};
		\node at (1,-1) {$0$};
		\node at (-1.5, -1.5) {$0$};
		\node at (-.5, -1.5) {$h^{2,1}$};
		\node at (.5, -1.5) {$1$};
		\node at (1.5, -1.5) {$0$};
		\node at (-2, -2) {$0$};
		\node at (-1, -2) {$0$};
		\node at (0, -2) {$h^{2,2}$}; 
		\node at (1,-2) {$0$};
		\node at (2,-2) {$0$};
		\node at (-1.5, -2.5) {$0$};
		\node at (-.5, -2.5) {$1$}; 
		\node at (.5, -2.5) {$h^{2,3}$};
		\node at (1.5, -2.5) {$0$};
		\node at (-1,-3) {$0$};
		\node at (0,-3) {$1$}; 
		\node at (1,-3) {$0$};
		\node at (-.5,-3.5) {$1$};
		\node at (.5,-3.5) {$0$};
		\node at (0, -4) {$1$};
	\end{tikzpicture}
\end{equation*}
The Hodge numbers $( h^{2,1}$, $h^{2,2}$, $h^{2,3})$ are $(0,0,0)$ or  $(1,2,1)$, depending on the complex structure on $SU(3)$. See Remark \ref{rema:W} for detail. 
\begin{rema}
If $\rho_{L}$ is non-trivial, then  the complex structure on $SU(3)$ is not left-invariant.
The  computations of Dolbeault cohomology on compact Lie groups  equipped with left-invariant complex structures (e.g., \cite{Pittie1988})  can not be applied.
\end{rema}

\begin{rema}
We can describe $W$ and  $d\co W\to H^{2}_{B}(SU(3))$ explicitly  for the differential graded algebra  $(A^*=H^{*}_{B}(SU(3))\otimes \bigwedge W, d)$; see \cite[Page 68]{IK2019}.
We use $(-,-)$ to denote the Cartan-Killing form on $SU(3)$. 
For a vector field $X$ on $SU(3)$, we define the $1$-form  $w_{X}=(X,-)$. 
Let $\bar{\frak t}$ be the vector space of fundamental vector fields on $SU(3)$ defined by  the $(S^{1})^{2}$-action associated with $(\rho_{L}, \rho_{R})\co (S^{1})^{2}\to T \times T$.
Then, for any $X\in \bar{\frak t}$, the $1$-form  $w_{X}$ is $T\times T$-invariant.
We can take $W=\{w_{X}\mid  X\in \bar{\frak t}\}$ and  $d\co W\ni w\mapsto [dw]\in H^{2}_{B}(SU(3))$. 
\end{rema}
\begin{rema}\label{rema:W}
For the differential bi-graded algebra $(B^{*,*}=H^{*,*}_{B}(SU(3))\otimes \bigwedge (W^{1,0}\oplus \overline{W^{1,0}}), \bar\partial)$, we have to reconstruct  $W$ so that $W$ is $J$-invariant see \cite[Proposition 4.9]{IK2019}.
We have a direct sum $TN=T{\mathcal F}\oplus T{\mathcal F}^\perp$ so that $T{\mathcal F}^\perp$  is $J$-invariant and invariant under  the $(S^{1})^{2}$-action associated with $(\rho_{L}, \rho_{R})\co (S^{1})^{2}\to T\times T$, see Section \ref{sec:transverseKaehler}.
Regarding $TN^*=T{\mathcal F}^*\oplus (T{\mathcal F}^\perp)^*$,
we should replace $W$ with the vector space of $1$-forms generated by  the locally free  $(S^{1})^{2}$-action associated with $\rho_{L}, \rho_{R}: (S^{1})^{2}\to T$. 
A direct sum decomposition $W\otimes \C=W^{1,0}\oplus \overline{W^{1,0}}$ corresponds to the parameter in $GL(2,\R)/GL(1,\C)$ as in Remark \ref{rem-cstr2}.
\end{rema}

\begin{bibdiv}
\begin{biblist}
\bib{Audin2004}{book}{
   author={Audin, Mich\`ele},
   title={Torus actions on symplectic manifolds},
   series={Progress in Mathematics},
   volume={93},
   edition={Second revised edition},
   publisher={Birkh\"{a}user Verlag, Basel},
   date={2004},
   pages={viii+325},
   isbn={3-7643-2176-8},
   review={\MR{2091310}},
   doi={10.1007/978-3-0348-7960-6},
}

\bib{EKA}{article}{
   author={El Kacimi-Alaoui, Aziz },
   title={Op\'erateurs transversalement elliptiques sur un feuilletage riemannien et applications},
   journal={Compositio Math.},
   volume={73},
   date={1990},
   number={1},
   pages={57--106},
   review={\MR{1042454}},
}

\bib{Eschenburg1992}{article}{
   author={Eschenburg, J.-H.},
   title={Inhomogeneous spaces of positive curvature},
   journal={Differential Geom. Appl.},
   volume={2},
   date={1992},
   number={2},
   pages={123--132},
   issn={0926-2245},
   review={\MR{1245552}},
   doi={10.1016/0926-2245(92)90029-M},
}
\bib{EZ2014}{article}{
   author={Escher, Christine},
   author={Ziller, Wolfgang},
   title={Topology of non-negatively curved manifolds},
   journal={Ann. Global Anal. Geom.},
   volume={46},
   date={2014},
   number={1},
   pages={23--55},
   issn={0232-704X},
   review={\MR{3205800}},
   doi={10.1007/s10455-013-9407-8},
}
\bib{GKZ2020}{article}{
   author={Goertsches, Oliver},
   author={Konstantis, Panagiotis},
   author={Zoller, Leopold},
   title={Symplectic and K\"{a}hler structures on biquotients},
   journal={J. Symplectic Geom.},
   volume={18},
   date={2020},
   number={3},
   pages={791--813},
   issn={1527-5256},
   review={\MR{4142487}},
   doi={10.4310/JSG.2020.v18.n3.a6},
}
\bib{IK2019}{article}{
   author={Ishida, Hiroaki},
   author={Kasuya, Hisashi},
   title={Transverse K\"ahler structures on central foliations of complex manifolds},
   journal={Ann. Mat. Pura Appl. (4) },
   volume={198},
   date={2019},
   number={1},
   pages={61--81},
   review={\MR{3918619}},
   doi={10.1007/s10231-018-0762-8},
}
\bib{IK2020}{article}{
   author={Ishida, Hiroaki},
   author={Kasuya, Hisashi},
   title={Non-invariant deformations of left-invariant complex structures on compact Lie groups.},
   journal={ Forum Math. },
   volume={34},
   date={2022},
   number={4},
   pages={907--911},
   review={\MR{4445553 }},
   doi={10.1515/forum-2021-0133},
}

\bib{Haefliger1985}{article}{
   author={Haefliger, A.},
   title={Deformations of transversely holomorphic flows on spheres and
   deformations of Hopf manifolds},
   journal={Compositio Math.},
   volume={55},
   date={1985},
   number={2},
   pages={241--251},
   issn={0010-437X},
   review={\MR{795716}},
}

\bib{LMN2007}{article}{
   author={Loeb, Jean-Jacques},
   author={Manjar\'{\i}n, M\`onica},
   author={Nicolau, Marcel},
   title={Complex and CR-structures on compact Lie groups associated to
   abelian actions},
   journal={Ann. Global Anal. Geom.},
   volume={32},
   date={2007},
   number={4},
   pages={361--378},
   issn={0232-704X},
   review={\MR{2346223}},
   doi={10.1007/s10455-007-9067-7},
}

\bib{LV1997}{article}{
 author={L\'opez de Medrano, Santiago},
   author={Verjovsky, Alberto},
   title={A new family of complex, compact, non-symplectic manifolds},
   journal={Bol. Soc. Brasil. Mat. (N.S.)},
   volume={28},
   date={1997},
   pages={253--269},
   issn={},
   review={\MR{1479504}},
   doi={},
}

\bib{Meersseman2000}{article}{
   author={Meersseman, Laurent}
   title={ A new geometric construction of compact complex manifolds in any dimension},
   journal={Math. Ann.},
   volume={317(1)},
   date={2000},
   pages={79--115},
   issn={},
   review={\MR{1760670}},
   doi={10.1007/s002080050360},
}

\bib{MV2004}{article}{
   author={Meersseman, Laurent},
   author={Verjovsky, Alberto},
   title={Holomorphic principal bundles over projective toric varieties},
   journal={J. Reine Angew. Math.},
   volume={572},
   date={2004},
   pages={57--96},
   issn={0075-4102},
   review={\MR{2076120}},
   doi={10.1515/crll.2004.054},
}

\bib{MM2003}{book}{
   author={Moerdijk, I.},
   author={Mrcun, J.},
   title={Introduction to Foliations and Lie Groupoids},
   series={Cambridge Studies in Advanced Mathematics},
   publisher={Cambridge University Press},
   date={2003},
   review={\MR{2012261}},
   doi={10.1017/CBO9780511615450},
}

\bib{Pfl2004}{book}{
   author={Pflaum, Markus J.},
   title={Analytic and geometric study of stratified spaces},
   series={Lecture Notes in Mathematics},
   volume={1768},
   publisher={Springer-Verlag, Berlin},
   date={2001},
   pages={viii+230},
   isbn={3-540-42626-4},
   review={\MR{1869601}},
   doi={10.1007/3-540-45436-5},
}

\bib{Pittie1988}{article}{
   author={Pittie, Harsh V.},
   title={The Dolbeault-cohomology ring of a compact, even-dimensional Lie group.},
   journal={ Proc. Indian Acad. Sci. Math. Sci.},
   volume={98},
   date={1988},
   pages={117--152},
   review={\MR{0994129}},
   doi={10.1007/BF02863632},
}

\bib{Samelson1953}{article}{
   author={Samelson, H.},
   title={A class of complex-analytic manifolds},
   journal={Portugal. Math.},
   volume={12},
   date={1953},
   pages={129--132},
   issn={0032-5155},
   review={\MR{59287}},
}
\bib{Wang1954}{article}{
   author={Wang, Hsien-Chung},
   title={Closed manifolds with homogeneous complex structure},
   journal={Amer. J. Math.},
   volume={76},
   date={1954},
   pages={1--32},
   issn={0002-9327},
   review={\MR{66011}},
   doi={10.2307/2372397},
}

\end{biblist}
\end{bibdiv}
\end{document}